\documentclass[11pt]{article}
\linespread{1.3}
\usepackage{epsfig}
\usepackage{amssymb}
\usepackage{amsthm}
\newtheorem{theorem}{Theorem}
\newtheorem{lemma}{Lemma}
\newtheorem{remark}{Remark}

\def\Frac#1#2{\frac{\displaystyle{#1}}{\displaystyle{#2}}}

\begin{document}

\begin{center}
{\Large
{\rm On bounds for solutions of monotonic first order difference-differential systems}}
\vspace*{0.5cm}

{\large
{\rm Javier Segura}}
\vspace*{0.5cm}

Departamento de Matem\'aticas, Estad\'{\i}stica y Computaci\'on.
Facultad de Ciencias. 
Universidad de Cantabria. 
39005-Santander, SPAIN. 

javier.segura@unican.es

\vspace*{0.5cm}

------------------------------------------------------------------------------------------------

{\bf Abstract}

\end{center}
Many special functions are solutions of first order linear systems $y_n'(x)=a_n(x)y_n(x)+d_n(x)y_{n-1}(x)$, 
$y_{n-1}'(x)=b_n(x)y_{n-1}(x)+e_{n}(x)y_n(x)$. We obtain bounds for the ratios $y_n(x)/y_{n-1}(x)$ and the logarithmic
derivatives of $y_n(x)$ for solutions of monotonic systems satisfying
certain initial conditions. 
For the case $d_n(x)e_n(x)>0$, sequences of upper and lower bounds can 
be obtained by iterating the recurrence relation; for minimal solutions
of the recurrence 
these are convergent sequences. The bounds are related to the Liouville-Green approximation for the associated second order ODEs 
as well as to the asymptotic behavior of the associated three-term recurrence relation as
 $n\rightarrow +\infty$; the bounds are sharp both as a function of $n$ and $x$. Many
special functions are amenable to this analysis, and we give several examples of application:
modified Bessel functions, parabolic cylinder functions,
Legendre functions of imaginary variable and Laguerre functions. New Tur\'an-type inequalities are established
from the function ratio bounds.
Bounds for monotonic
systems with $d_n(x)e_n(x)<0$ are also given, in particular for Hermite and Laguerre polynomials of real positive variable;
in that case the bounds can be used for bounding the monotonic region (and then the extreme zeros). 

\vspace*{0.2cm}

Keywords: Monotonic difference-differential systems, Riccati equation, Three-term recurrence relation,  Special function bounds,
Tur\'an-type inequalities, zeros of orthogonal polynomials

\vspace*{0.2cm}

MSC 2000: 33CXX, 26D20, 34C11, 34C10, 39A06

\begin{center}
-------------------------------------------------------------------------------------------------
\end{center}

\section{Introduction}
\label{intro}

Many special functions, and in particular functions of hypergeometric type, satisfy
first order differential systems of the form
\begin{equation}
\label{DDESta}
\begin{array}{l}
y_{n}^{\prime}(x)=a_n (x) y_{n}(x) + d_n (x) y_{n-1}(x),\\
y_{n-1}^{\prime}(x)=b_n (x) y_{n-1}(x) + e_n (x) y_{n}(x) .
\end{array}
\end{equation}

For the particular case of modified Bessel functions
sharp bounds for function ratios $y_n (x)/y_{n-1}(x)$ and logarithmic derivatives 
$y_n'(x)/y_n(x)$, as well as
Tur\'an-type inequalities were recently obtained in \cite{Segura:2011:BRM}; the key
ingredient in the analysis was the study of the qualitative behavior of the solutions of 
the Riccati equation satisfied by $h_n (x)=y_n (x)/y_{n-1}(x)$, together with the application
of the three-term recurrence relation. 

Ratios of Bessel functions appear in a great number of applications, particularly as
parameters of certain probability distributions (see, for instance, the examples
mentioned in \cite{Segura:2011:BRM}). Other special function ratios are
important in applications. In particular, parabolic cylinder ratios
appear in the study of Ornstein-Uhlenbeck processes (see, for instance \cite{Ali:2005:RFH}),
and other special function ratios (Whittaker, Legendre, Gauss hypergeometric functions) play 
similar roles as well \cite{Bor:2009:HD, Bou:2008:SFS,Ant:2011:EST}. 
In all these applications, a common characteristic is that the functions are real and the variables lie
inside a monotonic region
(region free of zeros).

In this paper, we put the ideas of \cite{Segura:2011:BRM} in a more general context 
and we analyze the qualitative behavior of the Riccati equation associated to the ratio $h_n (x)=y_n(x)/y_{n-1}(x)$,
\begin{equation}
\label{Riccainin}
h_n ^{\prime}(x)=d_n(x)-(b_n(x)-a_n(x))h_n(x)-e_n(x)h_n(x)^2 ,
\end{equation}
in the general case in which the quadratic equation
\begin{equation}
e_n(x)\lambda_n (x)^2 +(b_n(x)-a_n(x))\lambda_n (x) -d_n(x)=0
\end{equation}
has two distinct real roots $\lambda_n^{\pm}(x)$.
This case corresponds to
monotonic systems, with solutions which have one zero at most. As we will see, if the functions $\lambda_n^{\pm} (x)$
are monotonic, they are bounds for the ratios $h_n(x)$ satisfying certain initial value conditions.

We will discuss in detail bounds for systems with $d_n(x)e_n(x)>0$. Upper and lower bounds become available
which are accurate approximations for
large parameters and/or variable and with bounded error (upper and
lower bounds are available). 
The bounds 
given are related to the Liouville-Green approximation
for the associated second order ODE 
as well as to the asymptotic behavior of the associated three-term recurrence relation. 
This explains why the bounds become sharper as the variable $x$ and/or the parameter become large.

We provide several examples (section \ref{posiex}): modified Bessel functions,
parabolic cylinder functions, Associated Legendre functions of imaginary argument  and
Laguerre functions of negative argument.  From these bounds, a good number of new Tur\'an-type inequalities are obtained.
Tur\'an-type properties for special functions have received a considerable attention in recent years; just to cite
five different groups of researchers, we mention  \cite{Alz:2009:ATT,Bar:2008:TTI,Bar:2009:ANT,Laf:2006:OST,Segura:2011:BRM} 
(see also references cited therein).

We also give two examples of applications of the methods for the case $d_n(x)e_n(x)<0$ (section \ref{monocondit}) and use
these results for bounding function ratios for Laguerre and Hermite polynomials in the real axis (but outside the oscillatory region).
These bounds can be used for bounding the oscillatory region and, therefore, for bounding the extreme zeros.

In addition to direct applications in several areas, particularly in statistics and stochastic processes,
the bounds on function ratios have implications in the construction of numerical algorithms. These techniques provide
bounds for the region of computable parameters of a given function within the overflow and underflow
limitations, and they also provide bounds for the condition numbers of the functions (see section
\ref{PCFs} for the case of Parabolic Cylinder Functions). Additionally, as
discussed for the particular case of modified Bessel functions \cite{Segura:2011:BRM}, 
the bounds are useful for accelerating the convergence of certain continued fraction representations
which are used in numerical algorithms; for instance, the algorithms in \cite{Gil:1998:CEP,Segura:1998:PCF} could be improved
by using the bounds of sections \ref{PCFs} and \ref{OLFs} for accelerating the convergence.

\section{Qualitative behavior of Riccati equations}
\label{desiccation}

In this section
we deal with first order differential systems (\ref{DDESta}) 
with differentiable coefficients. 
We consider the ratio $h_n(x)=y_n(x)/y_{n-1}(x)$ satisfying the Riccati equation
\begin{equation}
\label{Riccaini}
h'(x)=d(x)-(b(x)-a(x))h(x)-e(x)h(x)^2 .
\end{equation}
The label $n$, which is common for $h$ and the coefficients $a$, $b$, $d$ and $e$,
has been dropped in (\ref{Riccaini}) for simplicity and because the analysis in this section
is valid for any system, depending or not on a parameter $n$. The explicit dependence on $n$ will be
recovered in the next section. 

We have $h'(x)=0$ when $h(x)=\lambda^{\pm}(x)$ with
\begin{equation}
\label{root}
\begin{array}{l}
\lambda^{\pm} (x) = \mbox{sign}(e(x)) R(x)\left[-\eta(x)\pm \sqrt{\eta (x)^2+s}\right],\\
R(x)=\sqrt{\left|\Frac{d(x)}{e(x)}\right|},\eta(x)=\Frac{b(x)-a(x)}{2\sqrt{\left|d(x)e(x)\right|}},
s=\mbox{sign}(d(x)e(x)) ,
\end{array}
\end{equation}
We consider the case with real roots $\lambda^{\pm} (x)$.
Two distinct situations may occur: either $d(x)e(x)>0$, or $d(x)e(x)<0$ but
$|\eta (x)|>1$.

The
condition $d(x)e(x)>0$ generally holds in the whole maximal interval of continuity of the
functions because the coefficients $d(x)$ and $e(x)$ do not change sign under very general conditions
(see, for instance, \cite[lemma 2.1]{Segura:2002:ZSF}\footnote{All that is required is that the system is 
satisfied by two independent sets of functions}). Contrarily, when $d (x) e (x)<0$ the condition $|\eta (x)|>1$
may hold only for a limited range of the variable $x$.
In the first case ($d (x)e (x)>0$) 
$h(x)$ may have one zero or one singularity, but not both (\cite[lemma 2.4]{Segura:2002:ZSF}), 
while in the second $h(x)$ may have both a zero and a singularity (\cite[Theorem 2.1]{Gil:2003:CZT}). 
We analyze the case $d(x)e(x)>0$ and assuming that no change of sign of $h(x)$ occurs. For the case $d(x)e(x)<0$, 
as the examples in section \ref{monocondit}
will show, similar arguments can be applied.

In the sequel, we consider $d(x)e(x)>0$. Without loss of generality, we take $d(x)>0$, $e(x)>0$ 
and then $\lambda^{+}(x)>0$ and $\lambda^{-}(x)<0$; if $d(x)<0$, $e(x)<0$ we can consider the replacement $y\rightarrow -y$ or
$w\rightarrow -w$.
In the next results, $(a,b)$ is an interval where $h(x)$ and the coefficients of the system are differentiable; $a$ or $b$ could be $+\infty$ or
$-\infty$. Depending on the value of $h(x)$ at $a^+$ or $b^-$ different bounds can be established.
First we consider $h(a^+)>0$. We enunciate three results and give a common proof.

\begin{lemma}
\label{posiposi}
If $h(a^+)>0$ then $h(x)>0$ in $(a,b)$
\end{lemma}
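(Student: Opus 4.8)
The plan is to argue by a first-crossing (continuity) argument, whose engine is the observation that the Riccati equation itself pins down the sign of $h'$ wherever $h$ vanishes. Evaluating (\ref{Riccaini}) at any point $x_0\in(a,b)$ with $h(x_0)=0$ kills the terms linear and quadratic in $h$, leaving $h'(x_0)=d(x_0)>0$ (recall that in this case we take $d(x)>0$). Hence at every zero of $h$ the function is strictly increasing: $h$ can cross the level $0$ only from below to above, never the reverse. Combined with positivity near $a$, this single fact should already force $h>0$ throughout $(a,b)$.

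To make this precise I would proceed by contradiction. Assume the set $Z=\{x\in(a,b):h(x)\le 0\}$ is nonempty and set $x_0=\inf Z$. Since $h(a^+)>0$ and $h$ is continuous on $(a,b)$, there is a right neighborhood of $a$ on which $h>0$, so $x_0>a$ and $h(x)>0$ for every $x\in(a,x_0)$. By continuity $h(x_0)\ge 0$, while selecting a sequence in $Z$ tending to $x_0$ gives $h(x_0)\le 0$; therefore $h(x_0)=0$ and $x_0$ is an interior point of $(a,b)$.

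The concluding step compares two evaluations of $h'(x_0)$. On one side the Riccati equation gives $h'(x_0)=d(x_0)>0$. On the other, for $x$ slightly below $x_0$ we have $h(x)>0=h(x_0)$, so the left difference quotients $(h(x_0)-h(x))/(x_0-x)$ are negative and their limit yields $h'(x_0)\le 0$, a contradiction. Hence $Z=\emptyset$ and $h(x)>0$ on all of $(a,b)$.

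I expect no serious obstacle: the statement is essentially a sign-preservation (comparison) principle for the Riccati equation. The only points requiring mild care are the bookkeeping around the infimum $x_0$ — guaranteeing that it is a genuine interior zero with $h>0$ immediately to its left — and the fact that the hypothesis is phrased as a one-sided limit $h(a^+)>0$ rather than a value at $a$, which is harmless once continuity on the open interval $(a,b)$ is invoked. I anticipate that the same mechanism, namely reading off $\mathrm{sign}\,h'$ at a zero directly from (\ref{Riccaini}), will drive the two companion results that the author proves alongside this one.
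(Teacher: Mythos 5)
Your zero-crossing argument is correct and uses exactly the mechanism of the first half of the paper's proof: the paper observes that $h'(x)>0$ whenever $0\le h(x)<\lambda^{+}(x)$, which at a zero of $h$ reduces to your identity $h'(x_0)=d(x_0)>0$, so $h$ can only cross the level $0$ upward. Your first-crossing formalization via $x_0=\inf Z$ and the left difference quotients is a sound way to make that half rigorous.

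However, the paper's proof has a second case which you do not address, and in this setting it is not a technicality: $h$ could a priori change sign \emph{discontinuously}, through a singularity. Here $h=y_n/y_{n-1}$ is a ratio of solutions of the linear system, so it has a pole wherever $y_{n-1}$ vanishes, and at a pole $h$ jumps between $+\infty$ and $-\infty$ --- precisely a sign change of the kind the lemma must exclude (the paper recalls earlier that for $d(x)e(x)>0$ the ratio ``may have one zero or one singularity, but not both''). Your argument rules this out only by assumption: the continuity of $h$ on all of $(a,b)$ that your $\inf Z$ / difference-quotient machinery needs is exactly what fails at a pole (your $x_0$ would then be the singular point, where $h(x_0)$ is undefined and the contradiction step collapses). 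The paper closes this case using the complementary Riccati inequality, which your proof never invokes: $h'(x)<0$ whenever $h(x)>\lambda^{+}(x)$. Starting from $h(a^+)>0$, the ratio cannot tend to $+\infty$ at an interior point $x_{\infty}$, because near such a point it would lie above $\lambda^{+}$ and hence be decreasing, and a function decreasing on a left neighborhood of $x_{\infty}$ cannot blow up there; since a jump to $-\infty$ would first require a downward zero crossing (already excluded), no discontinuous sign change can occur. Admittedly, the standing sentence before the lemma literally grants differentiability of $h$ on $(a,b)$, but the paper's own proof --- and every later application, where one does not know in advance that the denominator $y_{n-1}$ is zero-free --- treats the exclusion of the singularity as part of the lemma's content. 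To be complete, your proof needs this blow-up exclusion added; as written it establishes the conclusion only under a hypothesis that quietly assumes the hard part.
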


\begin{theorem}
\label{casopos}
If $h(a^+)>0$, $\lambda^{+}(x)$ is monotonic and 
$h^{\prime}(a^+)\lambda^{+\prime}(a^+)>0$ then $(h(x)-\lambda^{+}(x))\lambda^{+ \prime}(x)<0$ in $(a,b)$.
\end{theorem}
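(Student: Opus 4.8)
The plan is first to rewrite (\ref{Riccaini}) in factored form. Since $\lambda^\pm(x)$ are the roots of the quadratic $e\lambda^2+(b-a)\lambda-d$ whose vanishing defines the nullclines, Vieta's relations $\lambda^++\lambda^-=-(b-a)/e$ and $\lambda^+\lambda^-=-d/e$ turn the Riccati equation into
\[
h'(x)=-e(x)\bigl(h(x)-\lambda^+(x)\bigr)\bigl(h(x)-\lambda^-(x)\bigr).
\]
This is the engine of the argument: it converts the statement about $h'$ into one about the position of $h$ relative to the two curves $\lambda^\pm$.

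Next I would pin down the sign of the right-hand side. By Lemma \ref{posiposi}, $h(x)>0$ on $(a,b)$, and under the standing assumptions $d>0$, $e>0$ one has $\lambda^-(x)<0$, so $h(x)-\lambda^-(x)>0$ throughout. Because $e(x)>0$ as well, this gives the sign rule $\mathrm{sign}\,h'(x)=-\,\mathrm{sign}\bigl(h(x)-\lambda^+(x)\bigr)$: the ratio $h$ strictly increases when it lies below $\lambda^+$ and strictly decreases when it lies above. The hypothesis $h'(a^+)\lambda^{+\prime}(a^+)>0$ then fixes the initial configuration: if $\lambda^{+\prime}>0$ then $h'(a^+)>0$, forcing $h(a^+)<\lambda^+(a^+)$, while if $\lambda^{+\prime}<0$ then $h'(a^+)<0$, forcing $h(a^+)>\lambda^+(a^+)$. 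In both cases $(h-\lambda^+)\lambda^{+\prime}<0$ holds at $a^+$, so it remains only to show the sign is preserved on all of $(a,b)$.

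The decisive step is a first-crossing argument. Assuming the sign of $h-\lambda^+$ changes, let $x_0\in(a,b)$ be the first point with $h(x_0)=\lambda^+(x_0)$; the factored equation gives $h'(x_0)=0$, hence $(h-\lambda^+)'(x_0)=-\lambda^{+\prime}(x_0)$. In the increasing case $h-\lambda^+<0$ on $(a,x_0)$ forces $(h-\lambda^+)'(x_0)\ge 0$, contradicting $-\lambda^{+\prime}(x_0)<0$; the decreasing case is symmetric. I expect the only delicate point to be this crossing: the barrier inequality needs $\lambda^{+\prime}(x_0)\neq 0$, which is exactly what the monotonicity of $\lambda^+$ supplies, since it forces the sign of $\lambda^{+\prime}$ to be constant and nonzero along $(a,b)$. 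Reducing the nonlinear equation to the single active factor $h-\lambda^+$ via Lemma \ref{posiposi} is what makes this comparison behave like a linear one.
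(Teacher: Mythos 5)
Your proof is correct and follows essentially the same route as the paper's: the explicit factorization $h'=-e\,(h-\lambda^+)(h-\lambda^-)$ is just a formalization of the sign facts the paper states directly ($h'>0$ when $0\le h<\lambda^+$, $h'<0$ when $h>\lambda^+$), and your first-crossing contradiction at $x_0$, where $h'(x_0)=0$ clashes with the nonzero slope of the monotonic barrier $\lambda^+$, is precisely the paper's ``touching graphs'' argument made rigorous. The only organizational difference is that you invoke Lemma \ref{posiposi} as a prior result while the paper proves it inside the same combined proof, which is immaterial.
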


\begin{theorem}
\label{casopos2}
If $h(a^+)>0$, $\lambda^{+}(x)$ is monotonic and $h^{\prime}(a^+)\lambda^{+ \prime}(a^+)<0$ then either $h(x)$ reaches one 
relative extremum at $x_e\in (a,b)$ (a minimum if $\lambda^{+ \prime}(x)>0$ and a
maximum if $\lambda^{+ \prime}(x)<0$) or $(h(x)-\lambda_{+}(x))\lambda^{+ \prime}(x)>0$ in $(a,b)$.
\end{theorem}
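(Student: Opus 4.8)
The plan is to exploit the factored form of the Riccati equation. Since $d(x)>0$ and $e(x)>0$, the right-hand side of (\ref{Riccaini}) factors through the roots (\ref{root}) as
\[ h'(x)=-e(x)\bigl(h(x)-\lambda^{+}(x)\bigr)\bigl(h(x)-\lambda^{-}(x)\bigr), \]
with $\lambda^{+}(x)>0>\lambda^{-}(x)$. By Lemma \ref{posiposi} we have $h(x)>0$ on all of $(a,b)$, so $h(x)-\lambda^{-}(x)>0$, and together with $e(x)>0$ this reduces the sign of $h'$ to a single factor:
\[ \mbox{sign}\bigl(h'(x)\bigr)=-\mbox{sign}\bigl(h(x)-\lambda^{+}(x)\bigr). \]
Writing $g(x)=h(x)-\lambda^{+}(x)$, the hypothesis $h'(a^+)\lambda^{+\prime}(a^+)<0$ is then equivalent to $g(a^+)\lambda^{+\prime}(a^+)>0$, so the solution begins in the region where the alternative conclusion $(h-\lambda^{+})\lambda^{+\prime}>0$ is valid.

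The heart of the argument is a barrier-crossing analysis at any point where $h$ meets $\lambda^{+}$. At such a point $x_e$ one has $g(x_e)=0$, hence $h'(x_e)=0$, and differentiating gives $g'(x_e)=h'(x_e)-\lambda^{+\prime}(x_e)=-\lambda^{+\prime}(x_e)$. Thus the sign of $g'$ at \emph{every} crossing is fixed and opposite to that of $\lambda^{+\prime}$. I would carry out the case $\lambda^{+\prime}>0$ in full (the case $\lambda^{+\prime}<0$ is entirely symmetric, either through the replacement $y\to -y$ or by repeating the argument with reversed inequalities). With $\lambda^{+\prime}>0$ the starting region is $g(a^+)>0$, where $h'<0$ and hence $g'=h'-\lambda^{+\prime}<0$, so $g$ is strictly decreasing. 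Either $g$ remains positive on all of $(a,b)$, giving $(h-\lambda^{+})\lambda^{+\prime}>0$ throughout (the second alternative), or $g$ vanishes at a first point $x_e\in(a,b)$. There $g'(x_e)=-\lambda^{+\prime}(x_e)<0$, so $g$ passes downward through zero: $h<\lambda^{+}$ immediately to the right, forcing $h'>0$ there, while $h'<0$ to the left. Hence $x_e$ is a relative minimum of $h$, exactly as claimed.

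It remains to show this extremum is the only one, and this is the step I expect to be the main obstacle, since a priori $h$ could oscillate back and forth across $\lambda^{+}$. The resolution is that the crossing is one-directional: after $x_e$ the solution lies in the region $g<0$ with $\lambda^{+\prime}>0$, which is precisely the configuration of Theorem \ref{casopos}, and I would invoke the same barrier fact to conclude that $h$ can never climb back to touch $\lambda^{+}$. Indeed, at any hypothetical later crossing from below we would again have $g'=-\lambda^{+\prime}<0$, which is incompatible with $g$ returning to zero from negative values. Therefore $g<0$ and $h'>0$ for all $x>x_e$, so $h$ is strictly increasing past $x_e$ and has no further critical point, giving exactly one relative extremum. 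These three ingredients — the factorization, the sign reduction furnished by Lemma \ref{posiposi}, and the one-directional barrier crossing at $g=0$ — are common to Lemma \ref{posiposi}, Theorem \ref{casopos} and the present statement, which is why a single unified argument settles all three.
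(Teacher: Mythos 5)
Your proof is correct and takes essentially the same approach as the paper's: the paper likewise reduces the sign of $h'$ to the position of $h$ relative to $\lambda^{+}$ (your factorization of the Riccati right-hand side just makes this explicit) and rules out forbidden crossings by noting that $h'(x_e)=0$ at any contact point while $\lambda^{+\prime}(x_e)\neq 0$. Your one-directional barrier argument after $x_e$ is precisely how the paper concludes that $h'>0$ for all $x>x_e$, yielding the single relative extremum.
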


\begin{proof}
If $h(a^+)>0$, then $h(x)$ can not change sign continuously: it can not 
become zero because $h'(x)>0$ if $0\le h(x)<\lambda^{+}(x)$. On the other hand, 
it can not change sign discontinuously;
for this, starting with $h(a^+)>0$, a value $x_{\infty}\in (a,b)$ should exist such that $h(x_{\infty}^{-})=+
\infty$ but this is not possible because $h'(x)<0$ if $h(x)>\lambda^{+}(x)$. 

Now, we consider that $\lambda^{+}(x)$ is monotonic. We take the case $\lambda^{+ \prime}(x)>0$; the
case $\lambda^{+ \prime}(x)<0$ is analogous.

Assume first that $h^{\prime}(a^+)>0$; using (\ref{Riccaini}) this means that
$0<h (a^+)< \lambda^{+}(a^+)$. And then, necessarily $h (x)< \lambda^{+}(x)$ in $(a,b)$. Indeed, because
$\lambda^{+}(x)$ is monotonically increasing and the graph of $h(x)$ is below the graph of $\lambda^{+}(x)$ close to $x=a$, 
the graph of $h(x)$ may touch the graph of $\lambda^{+}(x)$ at $x=x_e$ only if the first one has a larger slope
at $x_e$, that is, if $h^{\prime}(x_e)>\lambda^{+ \prime}(x_e)>0$; but if 
$h(x_e)=\lambda_{+}(x_e)$ then $h'(x_e)=0$.
  
If, contrarily, $h^{\prime}(a^+)<0$
then the graph of $h(x)$ lies above the graph of $\lambda^+ (x)$ close to $x=a$ and there are two possibilities:
either it remains above $\lambda^{+}(x)$ in all the interval or there is a point $x_e\in (a,b)$ where 
$h(x_e)=\lambda^{+}(x_e)$ and $h'(x_e)=0$. Then the graph of $h(x)$ crosses the graph of $\lambda^{+}(x)$,
which is an increasing function, and $h'(x)>0$ for all $x>x_e$. Therefore there is a minimum at $x_e$.
\end{proof}

Figure 1 illustrates the situations described in Theorems \ref{casopos} and \ref{casopos2}.

\vspace*{0.7cm}
\begin{center}
\epsfxsize=8cm \epsfbox{positive.eps}
\end{center}
{\footnotesize
Figure 1: The characteristic root $\lambda^+ (x)$ divides the plane in two regions: $h'(x)>0$ if $0<h(x)<\lambda^+ (x)$
and $h'(x)<0$ if $h(x)>\lambda^+ (x)$. The graph of $h_1 (x)$ corresponds to the situation described in Theorem \ref{casopos} while
$h_2 (x)$ corresponds to Theorem \ref{casopos2} when an extremum is reached.}

If, differently from theorems \ref{casopos} and \ref{casopos2}, we have
 $h(a^+)<0$ then $h(x)$ may change sign once. But if it does not change sign and $h(b^-)<0$ we
are in the previous situation. Indeed, with the change of variable 
$x\rightarrow -x$ and the change of function 
$w(x)\rightarrow -w(x)$, we have that the new ratio of functions $\tilde{h}(x)=-y(-x)/w(-x)$ is such that 
$\tilde{h}(\alpha^+ )>0$ and the 
previous results hold in the interval $[\alpha ,\beta]=[-b , -a]$.  Then, we can write a common result for both cases.
We only give the result corresponding to Theorem \ref{casopos}.

\begin{theorem}
\label{casogeneral}
Let $h(x)$ be a solution of (\ref{Riccaini}) with continuous coefficients and $d(x)>0$, $e(x)>0$. Suppose that
either $h(a^+)>0$ or that $h(b^-)<0$ and take $s=+$, $c=a^+$ in the first case and $s=-$, $c=b^{-}$ in 
the second. Then, $h(x)$ does not change sign in $(a,b)$, and 
if the characteristic root $\lambda^{s}(x)$ is monotonic and $\lambda^{s \prime}(c)h'(c)>0$ then
$$ 
\left(\left|h(x)\right|-\left|\lambda^s (x)\right|\right)\Frac{d\lambda^s}{dx}<0\,\, \forall x\in (a,b)
$$
\end{theorem}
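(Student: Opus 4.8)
The plan is to treat the two cases of the hypothesis separately, reducing the second to the first. For the first case ($s=+$, $c=a^+$, $h(a^+)>0$) the statement is essentially a restatement of Theorem \ref{casopos}: Lemma \ref{posiposi} guarantees $h(x)>0$ throughout $(a,b)$, and since $d,e>0$ force $\lambda^+(x)>0$, the absolute values may be dropped so that $|h(x)|-|\lambda^+(x)|=h(x)-\lambda^+(x)$ and the claimed inequality is exactly the conclusion of Theorem \ref{casopos}.

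The work lies in the second case ($s=-$, $c=b^-$, $h(b^-)<0$), which I would reduce to the first by the reflection $x\mapsto -x$ indicated just before the statement. Set $\xi=-x$ and define $\tilde h(\xi)=-h(-\xi)$ on $(\alpha,\beta)=(-b,-a)$. A direct substitution into (\ref{Riccaini}) shows that $\tilde h$ solves a Riccati equation of the same form, with coefficients $\tilde d(\xi)=d(-\xi)>0$, $\tilde e(\xi)=e(-\xi)>0$ and $\tilde b(\xi)-\tilde a(\xi)=-(b(-\xi)-a(-\xi))$; in particular the sign hypotheses $\tilde d,\tilde e>0$ are preserved. Writing the characteristic equation for the transformed system and substituting $\mu=-\tilde\lambda$, I would check that its roots are $\tilde\lambda^{\pm}(\xi)=-\lambda^{\mp}(-\xi)$, so that the \emph{positive} root of the new system is $\tilde\lambda^+(\xi)=-\lambda^-(-\xi)$.

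With this dictionary the hypotheses transfer cleanly: $\tilde h(\alpha^+)=-h(b^-)>0$; $\tilde\lambda^+$ is monotonic iff $\lambda^-$ is; and since $\tilde h'(\xi)=h'(-\xi)$ and $\tilde\lambda^{+\prime}(\xi)=\lambda^{-\prime}(-\xi)$ one has $\tilde h'(\alpha^+)\,\tilde\lambda^{+\prime}(\alpha^+)=h'(b^-)\lambda^{-\prime}(b^-)>0$. I may then invoke Lemma \ref{posiposi} (giving $\tilde h>0$, hence $h<0$ and no sign change on $(a,b)$) and Theorem \ref{casopos} applied to $\tilde h$, obtaining $(\tilde h(\xi)-\tilde\lambda^+(\xi))\tilde\lambda^{+\prime}(\xi)<0$. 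Translating back via $\tilde h(\xi)-\tilde\lambda^+(\xi)=-(h(-\xi)-\lambda^-(-\xi))$ and $\tilde\lambda^{+\prime}(\xi)=\lambda^{-\prime}(-\xi)$, and setting $x=-\xi$, yields $(h(x)-\lambda^-(x))\lambda^{-\prime}(x)>0$ on $(a,b)$.

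Finally I would unify the two cases through the absolute-value formulation. Since $h$ and $\lambda^-$ are both negative in the second case, $|h(x)|-|\lambda^-(x)|=-(h(x)-\lambda^-(x))$, so the inequality $(h(x)-\lambda^-(x))\lambda^{-\prime}(x)>0$ is identical to $(|h(x)|-|\lambda^-(x)|)\frac{d\lambda^-}{dx}<0$, matching the $s=+$ case where $h,\lambda^+>0$ let the absolute values be removed directly. The step I expect to require the most care is the bookkeeping of signs under the reflection — in particular verifying that the positive root of the reflected Riccati equation is the image of $\lambda^-$ rather than $\lambda^+$, and tracking the two chain-rule sign flips so that the direction of the final inequality, and of the monotonicity condition transferred onto $\lambda^-$, both come out correctly.
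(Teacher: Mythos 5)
Your proposal is correct and follows essentially the same route as the paper: the author also handles the $h(b^-)<0$ case by the reflection $x\rightarrow -x$ together with the sign change $w\rightarrow -w$, i.e.\ $\tilde h(\xi)=-h(-\xi)$ on $(-b,-a)$, and then invokes the already-proved positive-case results (Lemma \ref{posiposi} and Theorem \ref{casopos}). Your write-up simply makes explicit the sign bookkeeping (that the positive root of the reflected equation is $-\lambda^-(-\xi)$, and how the hypotheses and conclusion transfer), which the paper leaves as a brief remark before the theorem.
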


\begin{remark}
The condition $\lambda^{s \prime} (c)h'(c)>0$ is equivalent to $$ 
\left(\left|h(c)\right|-\left|\lambda^s (c)\right|\right)\Frac{d\lambda^s}{dx}(c)<0$$
\end{remark}

\section{Bounds for first order DDEs}
\label{DDEpar}

Now, consider a first order difference-differential equation (\ref{DDESta})
and assume it holds for $n\ge n_0$ and that 
that the shift $n\rightarrow n+1$ is possible (true for continuous dependence 
on the parameter $n$). 
Then the solutions of (\ref{DDESta}) are also solutions of a three-term
recurrence relation
\begin{equation}
\label{TTRRR}
e_{n+1} y_{n+1}(x)+(b_{n+1} (x)-a_{n}(x)) y_{n}(x)-d_n y_{n-1}(x)=0 .
\end{equation}
As in the previous section, we assume $d_n(x)e_n(x)>0$. 

Let $\bar{\lambda}_n^{\pm}$ be the roots of the algebraic equation 
\begin{equation}
e_{n+1}\bar{\lambda}_n^2 +(b_{n+1}-a_n) \bar{\lambda_n } -d_n=0,
\end{equation}
that is:
\begin{equation}
\label{etarec}
\begin{array}{l}
\bar{\lambda}_n^{\pm } = R_n E_n (-\bar{\eta}_n\pm \sqrt{1+\bar{\eta}_n^2}),\\
R_n=\sqrt{d_n/e_n},\,E_n =\sqrt{e_n/e_{n+1}},\, \bar{\eta}_n=(b_{n+1}-a_n)/(2\sqrt{d_n e_{n+1}})
\end{array}
\end{equation}

If $\lim_{n\rightarrow +\infty}\bar{\eta}_n \neq 0$ 
then 
$\lim_{n\rightarrow +\infty}|\bar{\lambda}_n^{+}/\bar{\lambda}_n^{-}|\neq 1$, and if the coefficients are of algebraic growth as a function of 
$n$, Perron-Kreuser theorem (see \cite[Thm 4.5]{Gil:2007:NSF}) states that independent
 pairs of solutions $\{y^{(1)}_{k},y^{(2)}_{k}\}$ exist such that 
\begin{equation}
\label{pairs}
\lim_{n\rightarrow +\infty}\Frac{1}{\bar{\lambda}_{n}^{+}}\Frac{y_n^{(1)}}{y_{n-1}^{(1)}}=1,
\,\lim_{n\rightarrow +\infty}\Frac{1}{\bar{\lambda}_n^{-}}\Frac{y_n^{(2)}}{y_{n-1}^{(2)}}=1 .
\end{equation}

If $\bar{\eta}_n>0$ the minimal solution is $y_n^{(1)}$ and $y_{n}^{(2)}$
is dominant, and therefore $\lim_{n\rightarrow +\infty}y_n^{(1)}/y_n^{(2)}=0$ .  If $\eta_n<0$ the roles are reversed.
In both cases we have, for sufficiently large $n$,
$y_{n+1}^{(1)}y_n^{(1)}>0$ and $y_{n+1}^{(2)}y_n^{(2)}<0$.

\begin{remark}
The minimal solution satisfies $\bar{\eta}_n y_n/y_{n-1}>0$ for large $n$, while the dominant solutions are such that
$\bar{\eta}_n y_n/y_{n-1}<0$ for large $n$.
\end{remark}

Notice that the roots (\ref{etarec}) are closely related to the characteristic roots of the Riccati
equation (\ref{root}):
\begin{equation}
\label{rootricn}
\lambda_n^{\pm}(x)=\displaystyle\sqrt{\Frac{d_n (x)}{e_n(x)}}(-\eta_n (x)\pm\sqrt{1+\eta_n(x)^2}),\,\eta_n (x)=
\Frac{b_n(x)-a_n(x)}{2\sqrt{d_n (x) e_n (x)}}.
\end{equation}
As we have shown in the previous section, when $\lambda_n^{\pm}(x)$ are monotonic they provide bounds for some solutions.
On the other hand, if 
$\lim_{n\rightarrow +\infty}\bar{\lambda}_n^{\pm}/\lambda_n^{\pm}=1$ the function ratios have these bounds
as limits. This explains why the bounds (\ref{rootricn}) tend to be sharper as $n$ becomes larger.
Because of this, we refer to these bounds as Perron-Kreuser bounds.

In section \ref{PKboun} we will obtain additional upper and lower sharp bounds starting from the bounds of Theorem \ref{casopos} and using the three-term recurrence.

Before this, it is important to stress that for the Perron-Kreuser bounds to hold, it is crucial that the characteristic roots are monotonic as
a function of $x$.
This, however, is a quite general situation, as we next see.

\subsection{Monotonicity of the characteristic roots}

The next result relates the monotonicity properties of the characteristic roots with monotonicity 
properties as a function of $n$ which are known to hold for a large set of functions.

\begin{theorem}
\label{monolam}
Let $y_{k}(x)$, $k=n,n-1$, be solutions of second order ODEs $y^{\prime\prime}_k (x)+ B_k(x) y^{\prime}_k (x) +A_k (x) y_k (x)=0$, with $A_k(x)$, $B_k(x)$
continuous in $(a,b)$ and $B_n(x)=B_{n-1}(x)$. Assume that $y_n (x)$ and $y_{n-1}(x)$ satisfy a system (\ref{DDESta})
with $d_n (x) e_n(x)>0$ and differentiable coefficients. Then, $e_n(x)/d_n(x)$ is constant as a function of $x$, and 
if $A_n (x)\neq A_{n-1}(x)$ the characteristic
roots $\lambda_n^{\pm}(x)$ (\ref{rootricn})
are monotonic in $(a,b)$. Furthermore, $d\lambda_n^{\pm}(x)/dx$ has the same sign as $A_{n-1}(x)-A_n(x)$ and 
$-\eta_n^{\prime}(x)$.
\end{theorem}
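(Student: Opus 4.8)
The plan is to first convert the first order system into the two associated second order ODEs by elimination, read off $A_k(x)$ and $B_k(x)$ in terms of the system coefficients, and then exploit the hypothesis $B_n=B_{n-1}$ together with the explicit form of $\lambda_n^{\pm}(x)$ in (\ref{rootricn}). First I would differentiate the first equation of (\ref{DDESta}), use the second equation to eliminate $y_{n-1}'$, and then use the first equation again to eliminate $y_{n-1}$ itself; matching the result against $y_n''+B_n y_n'+A_n y_n=0$ gives
$$B_n=-\left(a_n+b_n+\frac{d_n'}{d_n}\right),\quad A_n=-a_n'+a_n b_n+a_n\frac{d_n'}{d_n}-d_n e_n.$$
Performing the symmetric elimination starting from the second equation yields
$$B_{n-1}=-\left(a_n+b_n+\frac{e_n'}{e_n}\right),\quad A_{n-1}=-b_n'+b_n\frac{e_n'}{e_n}+a_n b_n-d_n e_n.$$
The hypothesis $B_n=B_{n-1}$ then forces $d_n'/d_n=e_n'/e_n$, i.e. $\bigl(\ln(d_n/e_n)\bigr)'=0$, so $d_n/e_n$ (equivalently $e_n/d_n$) is constant in $x$. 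This is the first assertion, and, crucially, it makes $R_n=\sqrt{d_n/e_n}$ a constant.

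With $R_n$ constant, I would differentiate $\lambda_n^{\pm}(x)=R_n\bigl(-\eta_n\pm\sqrt{1+\eta_n^2}\bigr)$ directly, obtaining
$$\frac{d\lambda_n^{\pm}}{dx}=R_n\,\eta_n'\left(-1\pm\frac{\eta_n}{\sqrt{1+\eta_n^2}}\right).$$
The key observation is that $|\eta_n|<\sqrt{1+\eta_n^2}$, so the bracketed factor is strictly negative for \emph{both} choices of sign. Since $R_n>0$, the sign of $d\lambda_n^{\pm}/dx$ equals the sign of $-\eta_n'$ for both roots simultaneously; this is what will eventually force the two bounds to move in the same direction.

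It then remains to connect $\eta_n'$ to $A_{n-1}-A_n$. Writing $g=d_n'/d_n=e_n'/e_n$ and subtracting the two expressions for $A_k$ gives $A_{n-1}-A_n=(a_n-b_n)'-g(a_n-b_n)$. On the other hand, setting $w=\sqrt{d_n e_n}$ one has $w'/w=g$, and since $\eta_n=(b_n-a_n)/(2w)$ a short quotient-rule computation yields $\eta_n'=-(A_{n-1}-A_n)/(2\sqrt{d_n e_n})$. Hence $-\eta_n'$ has the same sign as $A_{n-1}-A_n$, and combining this with the previous paragraph shows that $d\lambda_n^{\pm}/dx$ has the same sign as both $-\eta_n'$ and $A_{n-1}-A_n$, which is the final assertion. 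For monotonicity itself, I would invoke continuity of $A_n,A_{n-1}$: the assumption $A_n(x)\neq A_{n-1}(x)$ throughout $(a,b)$ forces $A_{n-1}-A_n$ to keep a constant sign, so $d\lambda_n^{\pm}/dx$ is of constant nonzero sign and the roots are strictly monotonic.

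The main obstacle I anticipate is purely the bookkeeping in the elimination step: extracting $A_n,A_{n-1},B_n,B_{n-1}$ correctly requires careful differentiation and repeated substitution, and a single sign slip there would propagate into both the constancy of $d_n/e_n$ and the $\eta_n'$ identity. Once those four coefficients are pinned down, the remainder hinges on the one clean identity $\bigl(\ln(d_n/e_n)\bigr)'=0$, which collapses $R_n$ to a constant and reduces the monotonicity claim to the elementary sign analysis of the bracketed factor above.
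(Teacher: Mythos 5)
Your proof is correct and follows essentially the same route as the paper: eliminate $y_{n-1}$ (resp.\ $y_n$) to obtain the two second order ODEs, use $B_n=B_{n-1}$ to deduce $d_n'/d_n=e_n'/e_n$ and hence that $d_n/e_n$ is constant, and then identify the sign of $d\lambda_n^{\pm}/dx$ with that of $-\eta_n'$ via the identity $A_n-A_{n-1}=2\sqrt{d_n e_n}\,\eta_n'$. Your version merely spells out the explicit coefficients $A_k,B_k$ and the derivative computation for $\lambda_n^{\pm}$, which the paper leaves implicit; all formulas and sign conclusions agree with the paper's proof.
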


\begin{proof}
Differentiating the first equation of the system (\ref{DDESta}) and eliminating $y_{n-1}$ and proceeding similarly with the second equation we have
\begin{equation}
\begin{array}{l}
y_k'' (x) +B_k (x) y_k ' (x) +A_k (x) y_k (x)=0, \,k=n,n-1,
\end{array}
\end{equation}
with coefficients satisfying:
\begin{equation}
\label{loscoefs}
\begin{array}{l}
B_n(x)- B_{n-1}(x)=\Frac{e_n '(x)}{e_n(x)}-\Frac{d_n '(x)}{d_n(x)},\\
A_n (x) - A_{n-1} (x)= b_n '(x) -a_n '(x) -b_n(x) \Frac{e_n'(x)}{e_n(x)} + a_n(x) \Frac{d_n'(x)}{d_n(x)}
\end{array}
\end{equation}
Now, because we are assuming that $B_n (x) =B_{n-1}(x)$ the first equation implies that $d_n(x)/e_n(x)$ does not depend on $x$.
Therefore, from the expression of the characteristic roots (\ref{etarec}) we see that $d\lambda_n^{\pm}(x)/dx$ has the same sign as $-\eta_n '(x)$.
All that remains to be proved is that $A_n(x)-A_{n-1}(x)$ has the same sign as $\eta_n '(x)$. But considering the second equation
of (\ref{loscoefs}) and using that $d_n'(x)/d_n(x)=e_n '(x)/e_n(x)$ one readily sees that $A_n(x) -A_{n-1}(x)=2\sqrt{d_n(x) e_n(x)}\eta_n '(x)$, which
proves the theorem.

\end{proof}

\begin{remark}
\label{2otro}
If $e_n(x)d_n(x)<0$ and $\eta_n(x)^2>1$, it is also true that both roots are monotonic if $B_n(x)=B_{n-1}(x)$ and 
$A_n(x)\neq 
A_{n-1}(x)$, but $\lambda_n^{+}(x)\lambda_n^{-}(x)>0$ and $\lambda_n^{+ \prime}(x)\lambda_n^{- \prime}(x)<0$ in this case. 
\end{remark}

The case described in Theorem \ref{monolam} is, for instance, the situation for Bessel functions, parabolic cylinder functions and
the classical orthogonal polynomials when $n$
is the degree of the polynomials. 

\subsection{Perron-Kreuser bounds}

\label{PKboun}

In the following, we assume that $\eta_{n}(x)$, 
$\bar{\eta}_n (x)$, 
$d_n(x)$, $e_n(x)$ and $h_n (x)=y_{n}(x)/y_{n-1}(x)$  do not 
change sign for large enough $n$ (say $n\ge n_0$). Notice that 
the sign condition for $h_n(x)$ is satisfied for large enough $n$ when Perron-Kreuser theorem 
holds. An immediate application of Theorem \ref{casogeneral} gives:

\begin{theorem}[First Perron-Kreuser bound]
\label{1stP}
Let $d_n (x)>0$, $e_n (x)>0$ and $h_n (x)=y_n (x)/y_{n-1}(x)$ with constant sign for $n\ge  n_0$ and for
any $x\in (a,b)$. Let $s=\mbox{sign}(h_n (x))$ and 
$\lambda_n^{s}(x)$ as in Eq. (\ref{rootricn}). Then, if $h_n(a^+)>0$ and $h_n '(a^+)\lambda_n^{s \prime}(a^+)>0$
or  $h_n (b^-)>0$ and $h_n '(b^-)\lambda_n^{s \prime}(b^-)>0$ the following holds in $(a,b)$:
\begin{equation}
\left(|h_n(x)|-F_n^s (x)\right)\lambda_n^{s \prime}(x)<0,\,n \ge n_0
\end{equation}
\begin{equation}
\label{primobp}
F_n^s(x)
=R_n(x)(-s\eta_n(x)+\sqrt{1+\eta_n(x)^2})
=\Frac{R_n(x)}{s\eta_n(x)+\sqrt{1+\eta_n(x)^2}}
\end{equation}
\end{theorem}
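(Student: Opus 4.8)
The plan is to read Theorem \ref{1stP} as the $n$-indexed transcription of Theorem \ref{casogeneral}, so that the whole argument reduces to a change of notation together with the identity $F_n^s=|\lambda_n^s|$. First I would restore the label $n$ in the Riccati equation (\ref{Riccaini}): for each fixed $n\ge n_0$ the ratio $h_n$ solves (\ref{Riccainin}) with continuous coefficients and $d_n(x)>0$, $e_n(x)>0$, which is exactly the setting in which Theorem \ref{casogeneral} was established. The characteristic roots are then (\ref{rootricn}), and since $d_n,e_n>0$ one has $\lambda_n^+(x)>0>\lambda_n^-(x)$ on $(a,b)$.

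The one computational point is the identity $F_n^s(x)=|\lambda_n^s(x)|$, which is what turns the conclusion of Theorem \ref{casogeneral} into the bound (\ref{primobp}). I would check it by cases: for $s=+$, $F_n^+=R_n(-\eta_n+\sqrt{1+\eta_n^2})=\lambda_n^+=|\lambda_n^+|$, and for $s=-$, $F_n^-=R_n(\eta_n+\sqrt{1+\eta_n^2})=-\lambda_n^-=|\lambda_n^-|$. The equality of the two displayed forms of $F_n^s$ in (\ref{primobp}) is immediate after multiplying numerator and denominator by $\sqrt{1+\eta_n^2}-s\eta_n$, using $(\sqrt{1+\eta_n^2})^2-(s\eta_n)^2=1$, which holds because $s^2=1$.

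With these identifications the statement is an immediate application of Theorem \ref{casogeneral}, applied for each fixed $n\ge n_0$. The constant-sign hypothesis makes $s=\mbox{sign}(h_n)$ well defined and guarantees that $h_n$ keeps its sign on $(a,b)$, so that $\lambda_n^s$ is the characteristic root lying on the same side as $h_n$. The two displayed alternatives are the two endpoint cases of Theorem \ref{casogeneral}, anchored at $c=a^+$ and at $c=b^-$ respectively: in each the orientation hypothesis $h_n'(c)\lambda_n^{s\prime}(c)>0$ is precisely the condition $\lambda^{s\prime}(c)h'(c)>0$ there, and the conclusion $(|h_n|-|\lambda_n^s|)\,d\lambda_n^s/dx<0$ becomes $(|h_n|-F_n^s)\lambda_n^{s\prime}<0$.

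The main thing that needs care is the sign-and-endpoint bookkeeping: pairing $s=\mbox{sign}(h_n)$ with the adjacent root $\lambda_n^s$ and the correct anchor $c$, and noting that the right-endpoint alternative is the negative-$h_n$ case of Theorem \ref{casogeneral}, reached by the reflection $x\to -x$, $w\to -w$ of Section \ref{desiccation}. One must also carry along the hypothesis, inherited from Theorem \ref{casogeneral}, that $\lambda_n^s$ be monotonic on $(a,b)$; this is the standing assumption of the present subsection and holds in all the applications by Theorem \ref{monolam}. No estimate or limiting argument is involved---this first bound is the phase-plane dichotomy of Section \ref{desiccation} rewritten in the parameter-dependent notation, with the sharpening through the three-term recurrence deferred to the rest of Section \ref{PKboun}.
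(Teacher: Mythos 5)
Your proposal is correct and coincides with the paper's own treatment: the paper introduces Theorem \ref{1stP} with the single line ``An immediate application of Theorem \ref{casogeneral} gives,'' and your write-up simply fills in the details of that application --- the identity $F_n^s(x)=|\lambda_n^s(x)|$, the algebra equating the two displayed forms of $F_n^s$, and the endpoint/sign bookkeeping, with the monotonicity of $\lambda_n^s$ carried as the standing assumption of the subsection (guaranteed in practice by Theorem \ref{monolam}). Your reading of the right-endpoint alternative as the negative-$h_n$ case of Theorem \ref{casogeneral} is the intended one: the statement's ``$h_n(b^-)>0$'' is evidently a slip for ``$h_n(b^-)<0$'' (as the parabolic cylinder application with $h_n(+\infty)=0^-$, $s=-$ confirms), since with constant sign a condition $h_n(b^-)>0$ anchored at $b^-$ would not by itself exclude an initial crossing of $\lambda_n^+$ near $a$.
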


Further bounds can be obtained by iteration of (\ref{TTRRR}), which we write:
\begin{equation}
\label{remi}
\Frac{y_n(x)}{y_{n-1}(x)}=d_n \left(b_{n+1}-a_n+e_{n+1}\Frac{y_{n+1}(x)}{y_n (x)}\right)^{-1}
\end{equation}
it is clear that for minimal solutions ($\bar{\eta}_n (x)y_n (x)/y_{n-1} (x)>0$ for large $n$), by
substituting $y_{n+1}(x)/y_n (x)$ by a lower (upper) bound we get an upper (lower) bound for $y_{n}(x)/y_{n-1} (x)$. 
We only give the first iteration.

\begin{theorem}[Second Perron-Kreuser bound for minimal solutions]
\label{PK21}
Under the conditions of Theorem \ref{1stP} and if $s\bar{\eta}_{n}>0$, $s=\mbox{sign}(h_n)$  then
\begin{equation}
\left(|h_n (x)|-S_n^{s +}\right) \lambda_n^{s \prime}(x) > 0 ,\,n \ge n_0
\end{equation}
where 
\begin{equation}
\label{secobp}
S_n^{s +}=\Frac{D_n E_n R_{n}}{s (2 D_n \bar{\eta}_n -\eta_{n+1})
+\sqrt{1+\eta_{n+1}^2}}
\end{equation}
$D_n =\sqrt{d_n/d_{n+1}}$; $E_n$, $R_n$ and $\bar{\eta}_n$ given by (\ref{etarec}) and $\eta_n$ by
(\ref{rootricn}).
\end{theorem}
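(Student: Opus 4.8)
The plan is to derive the second bound in one step from the exact recurrence (\ref{remi}) combined with the first bound of Theorem \ref{1stP} applied at level $n+1$. Writing $h_n=d_n\bigl(b_{n+1}-a_n+e_{n+1}h_{n+1}\bigr)^{-1}$, I regard $h_n$ as a M\"obius function of $h_{n+1}$; since $\partial h_n/\partial h_{n+1}=-d_ne_{n+1}\bigl(b_{n+1}-a_n+e_{n+1}h_{n+1}\bigr)^{-2}<0$, this map is strictly decreasing on each branch. Theorem \ref{1stP} bounds $|h_{n+1}|$ by $F_{n+1}^s=|\lambda_{n+1}^s|$, equivalently it bounds the signed quantity $h_{n+1}$ by the signed root $\lambda_{n+1}^s$ (which carries sign $s$). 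The idea is then to insert $\lambda_{n+1}^s$ for $h_{n+1}$ in (\ref{remi}) and read off the one-sided bound $S_n^{s+}$ for $|h_n|$, the decreasing character of the map being exactly what turns the ``$<0$'' of Theorem \ref{1stP} into the ``$>0$'' claimed here.

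To make the substitution rigorous I would proceed as follows. Under the standing hypotheses Theorem \ref{1stP} holds at both levels $n$ and $n+1$, and by Theorem \ref{monolam} the orientation $\sigma=\mbox{sign}\,\lambda_k^{s\prime}(x)$ equals $\mbox{sign}(-\eta_k'(x))$, which in the monotonic-system setting is common to $k=n$ and $k=n+1$; thus $(|h_{n+1}|-F_{n+1}^s)\sigma<0$. Next I would invoke minimality: since $b_{n+1}-a_n=2\sqrt{d_ne_{n+1}}\,\bar\eta_n$, the pole of the M\"obius map, at $h_{n+1}=-(b_{n+1}-a_n)/e_{n+1}$, has sign opposite to $\bar\eta_n$, i.e. opposite to $s$ by the hypothesis $s\bar\eta_n>0$, whereas both $h_{n+1}$ and the inserted value $\lambda_{n+1}^s$ carry sign $s$. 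Hence $h_{n+1}$ and $\lambda_{n+1}^s$ lie on the same branch of the map, the substitution is legitimate, and $S_n^{s+}$ is finite and positive. Combining the decreasing monotonicity with $(|h_{n+1}|-F_{n+1}^s)\sigma<0$ then gives $(|h_n|-S_n^{s+})\sigma>0$, i.e. $(|h_n|-S_n^{s+})\lambda_n^{s\prime}(x)>0$.

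It remains to identify the inserted expression with (\ref{secobp}). Using $F_{n+1}^s=R_{n+1}(-s\eta_{n+1}+\sqrt{1+\eta_{n+1}^2})$ with $R_{n+1}=\sqrt{d_{n+1}/e_{n+1}}$ gives $e_{n+1}F_{n+1}^s=\sqrt{d_{n+1}e_{n+1}}\,(-s\eta_{n+1}+\sqrt{1+\eta_{n+1}^2})$, and with $b_{n+1}-a_n=2\sqrt{d_ne_{n+1}}\,\bar\eta_n$ the denominator becomes $\sqrt{d_{n+1}e_{n+1}}\bigl(2D_n\bar\eta_n-s\eta_{n+1}+\sqrt{1+\eta_{n+1}^2}\bigr)$ after factoring out $\sqrt{d_{n+1}e_{n+1}}$ and using $D_n=\sqrt{d_n/d_{n+1}}$; applying $d_n/\sqrt{d_{n+1}e_{n+1}}=D_nE_nR_n$ to the numerator then reproduces (\ref{secobp}), the two cases $s=\pm$ differing only through the factor $s$ multiplying $(2D_n\bar\eta_n-\eta_{n+1})$. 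I expect the genuine difficulty to reside not in this algebra but in the sign bookkeeping of the second paragraph: one must verify, uniformly in $s=\pm$, that the M\"obius pole is separated from $h_{n+1}$ so that the monotone substitution does not cross a singularity, and that the decreasing map flips the inequality in precisely the direction stated. Both facts rest on the minimality condition $s\bar\eta_n>0$, which is exactly what pins the pole to the opposite side of the branch containing $h_{n+1}$.
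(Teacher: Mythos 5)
Your proof is correct and takes essentially the same route as the paper: the paper obtains Theorem \ref{PK21} precisely by substituting the first Perron--Kreuser bound (Theorem \ref{1stP}, applied with the shift $n\rightarrow n+1$) for $y_{n+1}/y_n$ in the recurrence (\ref{remi}), observing that for minimal solutions ($s\bar{\eta}_n>0$) this substitution reverses the inequality. Your extra bookkeeping---the strictly decreasing M\"obius map, the pole lying on the side of sign $-s$ so no branch is crossed, and the algebra reducing the substituted value to (\ref{secobp})---simply makes explicit what the paper declares ``clear'' and leaves unwritten.
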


The second superscript of the notation $S_n^{s +}$ stands for the sign of $\bar{\eta}_{n}y_n/y_{n-1}$.

Notice that theorem \ref{PK21} may be true for $n=n_0-1$ too, because Theorem \ref{1stP} is used in the proof with the shift $n\rightarrow n+1$.

The similarity of the second expression of (\ref{primobp}) with (\ref{secobp}) indicates that for coefficients of algebraic
growth we will generally have $\lim_{n\rightarrow +\infty}F_n^s/S_n^{s+}=1$.

Further iterations are possible and this gives a convergent sequence of upper and lower bounds under the conditions of Theorem \ref{1stP} and
\ref{PK21} and provided that Perron-Kreuser theorem holds (which  implies that the recurrence admits a minimal solutions). We don't prove this result,
but the convergence of the sequence of bounds for the minimal solution 
follows immediately by using the same arguments considered in \cite{Segura:2011:BRM} for the case of Modified Bessel functions of the first kind.

We can also obtain additional bounds for dominant solutions by writing
\begin{equation}
\label{reme}
\Frac{y_{n}(z)}{y_{n-1} (x)}=-\Frac{b_n-a_{n-1}}{e_n}+\Frac{d_{n-1}}{e_n}\Frac{y_{n-2}(z)}{y_{n-1} (z)}
\end{equation}
Differently from the case of minimal solutions, the sequence of bounds is not a convergent sequence.
We give an explicit formula for the first iteration:

\begin{theorem}[Second Perron-Kreuser bound for dominant solutions]
\label{PK22}
Under the conditions of Theorem \ref{1stP} and if $s \bar{\eta}_{n-1}<0$, $s=\mbox{sign}(h_n)$,
\begin{equation}
(|h_n(x)|-S_n^{s-})\lambda_n^{s \prime}(x) >0,\,n \ge n_0 +1
\end{equation}
where
\begin{equation}
\label{secobp2}
S_n^{s -}=D_{n-1}E_{n-1}R_n\left(-s(2 E_{n-1} ^{-1}\bar{\eta}_{n-1} -\eta_{n-1})+\sqrt{1+\eta_{n-1}^2}\right)
\end{equation}
\end{theorem}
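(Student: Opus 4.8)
The plan is to derive the dominant-solution bound from the first Perron--Kreuser bound (Theorem \ref{1stP}) by one step of the backward recurrence (\ref{reme}). Writing $y_{n-2}/y_{n-1}=1/h_{n-1}$, equation (\ref{reme}) reads $h_n=-(b_n-a_{n-1})/e_n+(d_{n-1}/e_n)\,h_{n-1}^{-1}$, an affine map of $h_{n-1}^{-1}$ with positive coefficient $d_{n-1}/e_n>0$. First I would apply Theorem \ref{1stP} at index $n-1$ (legitimate since $n\ge n_0+1$ forces $n-1\ge n_0$) to bound $|h_{n-1}|$ by $F_{n-1}^s$ as in (\ref{primobp}), and then push this bound through the affine map to obtain the bound on $h_n$.

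The delicate point is the sign bookkeeping. Under the standing assumption that $h_k$ keeps a constant sign for $k\ge n_0$, both $h_n$ and $h_{n-1}$ carry the same sign $s$; the hypothesis $s\bar\eta_{n-1}<0$ is exactly the dominant-solution signature (it matches the Remark characterizing dominant solutions by $\bar\eta_n y_n/y_{n-1}<0$). Because the map sends $h_{n-1}^{-1}$ monotonically to $h_n$, and taking reciprocals reverses an inequality, the direction of the bound flips relative to Theorem \ref{1stP}: an upper (lower) bound $|h_{n-1}|<F_{n-1}^s$ becomes a lower (upper) bound on $|h_n|$. Concretely, substituting the extremal value $h_{n-1}=sF_{n-1}^s$ produces the value $sS_n^{s-}$ for $h_n$, so that $|h_{n-1}|\lessgtr F_{n-1}^s$ translates into $|h_n|\gtrless S_n^{s-}$; this is where the conclusion acquires the sign $>0$. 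To phrase the result through $\lambda_n^{s\prime}$ rather than $\lambda_{n-1}^{s\prime}$ I would invoke Theorem \ref{monolam}, by which every $d\lambda_k^s/dx$ has the sign of $-\eta_k'$, so $\lambda_{n-1}^{s\prime}$ and $\lambda_n^{s\prime}$ agree in sign and the two formulations of the inequality coincide.

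It remains to identify the substituted expression with (\ref{secobp2}). Using $\bar\eta_{n-1}=(b_n-a_{n-1})/(2\sqrt{d_{n-1}e_n})$ from (\ref{etarec}) gives $-(b_n-a_{n-1})/e_n=-2\bar\eta_{n-1}R_{n-1}E_{n-1}$, while $d_{n-1}/e_n=R_{n-1}^2E_{n-1}^2$. Inserting the second form of $F_{n-1}^s$ in (\ref{primobp}) and using $s^2=1$ I would collect the two terms into $R_{n-1}E_{n-1}\big[-2\bar\eta_{n-1}+E_{n-1}(\eta_{n-1}+s\sqrt{1+\eta_{n-1}^2})\big]$. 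The final cosmetic step is the identity $R_{n-1}E_{n-1}=D_{n-1}R_n$ (both equal $\sqrt{d_{n-1}/e_n}$), which rewrites the prefactor and recovers $sS_n^{s-}$ with $S_n^{s-}$ exactly as in (\ref{secobp2}). The main obstacle is not any single computation but the coordinated tracking of the three signs $s$, $\mathrm{sign}(\lambda_{n-1}^{s\prime})$ and $\mathrm{sign}(\lambda_n^{s\prime})$ through the reciprocal, since a misplaced sign would turn the strict $>0$ into its opposite; the algebra itself is routine once the identity $R_{n-1}E_{n-1}=D_{n-1}R_n$ is noticed.
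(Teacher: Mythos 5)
Your proposal is correct and follows exactly the route the paper intends (the paper states Theorem \ref{PK22} without an explicit proof, only indicating that it follows from one backward iteration (\ref{reme}) of the recurrence together with Theorem \ref{1stP} applied with the shift $n\rightarrow n-1$, which is why validity starts at $n_0+1$): you apply the first Perron--Kreuser bound at index $n-1$, push it through the decreasing affine map in $h_{n-1}^{-1}$, and your algebra, including the identity $R_{n-1}E_{n-1}=D_{n-1}R_n=\sqrt{d_{n-1}/e_n}$, correctly reproduces (\ref{secobp2}). The only caveat, which the paper itself leaves equally implicit, is the sign agreement of $\lambda_{n-1}^{s\prime}$ and $\lambda_n^{s\prime}$ (your appeal to Theorem \ref{monolam} gives each sign as that of $-\eta_k'$ but does not by itself force consistency across $k$; this is part of the standing uniform-in-$n$ assumptions of section \ref{PKboun}).
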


Notice that the previous theorem can only be guaranteed to be true for $n=n_0+1$, because Theorem \ref{1stP} is used 
in the proof with the shift $n\rightarrow n-1$.

The similarity of the first expression in (\ref{primobp}) with (\ref{secobp2}) is clear. For coefficients of algebraic
growth we will generally have $\lim_{n\rightarrow +\infty}F_n^s/S_n^{s-}=1$.

\subsection{Tur\'an-type inequalities}
\label{turanin}

Because upper and lower bounds are available for $|y_{n}/y_{n-1}|$ both when $y_n$ is a minimal
or a dominant solution (Theorems \ref{1stP}, \ref{PK21} and \ref{PK22}), upper and lower bounds
for $|y_{n}/y_{n-1}||y_n /y_{n+1}|$ become available. The modulus can be skipped if $y_n/y_{n-1}$
does not change sign (as assumed earlier). With this:

\begin{equation}
l_n\le L_n (x)<\Frac{y_n(x)}{y_{n+1}(x)}\Frac{y_n (x)}{y_{n-1} (x)}
< U_n (x)\le u_n,
\end{equation}
where $l_n=\min_{x}\{L_n (x)\}$ and $u_n=\max_x\{U_n (x)\}$.
Many new Tur\'an-type inequalities are found in section \ref{examples} by using this simple idea.

\subsection{Bounds of Liouville-Green type}

Using the difference-differential system (\ref{DDESta}) and the Perron-Kreuser bounds, bounds on the logarithmic 
derivatives can be established. We give the bounds obtained from the first Perron-Kreuser bound.

\begin{theorem}
\label{LGchain}
Under the hypothesis of Theorem  \ref{1stP} and if $d\lambda_n^{s}/dx>0$ ($s={\mbox sign} (y_n (x)/y_{n-1}(x))$):
\begin{equation}
\label{LGD}
 s \Frac{y_{n-1}^{\prime}(x)}{y_{n-1}(x)} <s\Frac{a_n(x)+b_n (x)}{2}+\sqrt{d_n(x)e_n(x)}\sqrt{1+\eta_n (x)^2}<
 s \Frac{y_{n}^{\prime}(x)}{y_{n}(x)}
\end{equation}
If $d\lambda_n^{s}/dx<0$ the inequalities are reversed
\end{theorem}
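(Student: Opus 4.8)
The plan is to transport the ratio bound of Theorem \ref{1stP} onto the two logarithmic derivatives, using the system (\ref{DDESta}) itself to express those derivatives directly in terms of $h_n(x)=y_n(x)/y_{n-1}(x)$. Dividing the second equation of (\ref{DDESta}) by $y_{n-1}$ and the first by $y_n$ gives
\begin{equation}
\label{LGids}
\Frac{y_{n-1}^{\prime}(x)}{y_{n-1}(x)}=b_n(x)+e_n(x)h_n(x),\qquad
\Frac{y_n^{\prime}(x)}{y_n(x)}=a_n(x)+\Frac{d_n(x)}{h_n(x)} .
\end{equation}
Since $e_n>0$, the first right-hand side is strictly increasing in $h_n$; since $d_n>0$, the second is strictly decreasing in $h_n$ on each of the half-lines $h_n>0$ and $h_n<0$. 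Because $h_n$ keeps the constant sign $s$ and never vanishes, it lies on the same half-line as $\lambda_n^s$, so once Theorem \ref{1stP} tells me on which side of $\lambda_n^s$ the ratio $h_n$ sits, (\ref{LGids}) transfers that information to the two logarithmic derivatives, pushing them in opposite directions.

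Two algebraic observations do the rest. First, comparing (\ref{primobp}) with (\ref{rootricn}) shows that $F_n^s(x)=|\lambda_n^s(x)|$, so the conclusion of Theorem \ref{1stP} reads simply $(|h_n|-|\lambda_n^s|)\lambda_n^{s\prime}<0$. Second, $\lambda_n^s$ is a root of the characteristic quadratic $e_n\lambda^2+(b_n-a_n)\lambda-d_n=0$; dividing that relation by $\lambda_n^s$ yields $b_n+e_n\lambda_n^s=a_n+d_n/\lambda_n^s$, i.e. the two bounding expressions obtained from (\ref{LGids}) by the substitution $h_n\to\lambda_n^s$ coincide. A direct computation using $b_n=\frac{a_n+b_n}{2}+\sqrt{d_ne_n}\,\eta_n$ and $e_n\lambda_n^s=\sqrt{d_ne_n}(-\eta_n+s\sqrt{1+\eta_n^2})$ identifies this common value as $M_n^s=\frac{a_n+b_n}{2}+s\sqrt{d_ne_n}\sqrt{1+\eta_n^2}$, which is exactly $s$ times the middle member of (\ref{LGD}) (recall $s^2=1$).

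Finally I would assemble the chain. Suppose $d\lambda_n^s/dx>0$; then Theorem \ref{1stP} gives $|h_n|<|\lambda_n^s|$. If $s=+$ this reads $h_n<\lambda_n^+$, and the monotonicity noted after (\ref{LGids}) yields $y_{n-1}'/y_{n-1}<M_n^+<y_n'/y_n$, which is (\ref{LGD}). If $s=-$ then $|h_n|<|\lambda_n^-|$ means $h_n>\lambda_n^-$ (both negative), and monotonicity now gives $y_n'/y_n<M_n^-<y_{n-1}'/y_{n-1}$; multiplying through by $s=-1$ and using that $sM_n^s$ is the middle member of (\ref{LGD}) again reproduces (\ref{LGD}). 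The case $d\lambda_n^s/dx<0$ merely reverses the inequality $|h_n|\gtrless|\lambda_n^s|$ from Theorem \ref{1stP}, hence reverses the whole chain. No step is deep; the only points demanding care are the sign bookkeeping for $s=-$ — where $|h_n|<|\lambda_n^s|$ must be rewritten as $h_n>\lambda_n^-$ and the decreasing map $a_n+d_n/h$ is applied on the negative half-line — and the verification that the common value $M_n^s$ is precisely the middle member of (\ref{LGD}). These are exactly the spots where a misplaced sign would invert an inequality, so I expect them to be the main (if modest) obstacle.
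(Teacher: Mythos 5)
Your proof is correct and takes precisely the route the paper intends: the paper states this theorem without an explicit proof, indicating only that it follows by combining the system (\ref{DDESta}) with the first Perron-Kreuser bound (Theorem \ref{1stP}), which is exactly what you do. The key identities you verify --- $F_n^s(x)=|\lambda_n^s(x)|$ and $b_n+e_n\lambda_n^s=a_n+d_n/\lambda_n^s=\frac{a_n+b_n}{2}+s\sqrt{d_n e_n}\sqrt{1+\eta_n^2}$ --- as well as your sign bookkeeping for $s=-$ (rewriting $|h_n|<|\lambda_n^-|$ as $h_n>\lambda_n^-$ and using that $h\mapsto d_n/h$ decreases on the negative half-line) are all correct.
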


 Two consequences follow. First, we observe that the ratios $y^{\prime}_{k}(x)/y_{k}(x)$
are monotonic as a function of the discrete variable $k$. Second, because we are
assuming that the shift $n \rightarrow n+1$ is possible, 
we have both an upper and a lower bound for $y^{\prime}_n/y_{n}$. Upper and lower bounds could also be obtained by
considering both the first and second Perron-Kreuser bounds. 

 In the examples we will see that these bounds, after integrating the logarithmic derivative, are related to the Liouville-Green 
approximation for solutions of
second order ODEs. In fact, using this analysis and by Liouville-transforming the first order system associated
to the ODE $y''(x)+A(x)y(x)=0$, conditions can be established under which the
LG approximation for the solutions the ODE $y''(x)+A(x)y(x)=0$ are bounds for some of the solutions. We leave
this analysis for a future paper.

\section{Applications}
\label{examples}

We give a number of examples of application of the techniques described in the paper. 
We concentrate
mainly on the case $d_n(x)e_n(x)>0$.
We also give two examples of application for monotonic systems with $d_n(x) e_n (x)<0$. 
The examples given by no means exhaust the functions for which the analysis is possible.

\subsection{Cases with $d_n(x)e_n(x)>0$} 
\label{posiex}

 We give examples which include classical orthogonal
polynomials outside their interval of orthogonality. 
In all cases except the last one, Theorem \ref{monolam} holds.
The last case is that of Laguerre functions of negative argument, for which
 Theorem \ref{monolam} can not be applied but the characteristic roots are still monotonic and
the same analysis is therefore possible. 
Some monotonicity properties for the determinants of some of these functions 
(modified Bessel functions, Hermite polynomials of imaginary order and Laguerre polynomials of negative argument) 
were considered in \cite{Ismail:2007:MPD}.

\subsubsection{Modified Bessel functions}

These are solutions of $x^2 y''+x y'-(x^2+\nu^2)y=0$.  This was the case considered 
in detail in \cite{Segura:2011:BRM}, and most of the results obtained in that paper are direct consequences of
the more general results of the present one.

\subsubsection{Parabolic cylinder functions}
\label{PCFs}

The parabolic cylinder
function $U(n,x)$ is a solution of the differential equation $y''(x)-(x^2/4+n)y(x)=0$, with coefficient 
$A(x)=-(x^2/4+n)$ depending monotonically on the parameter $n$ (Theorem \ref{monolam} holds).

Considering the DDE satisfied by $U(n,x)$ \cite[12.8.2-3]{Olv:2010:NIST} and defining 
$y_n (x)=e^{i\pi n} U(n,x)$ \footnote{It is not important that the new functions are complex, because
we are dealing with ratios; an alternative definition could be $y_n (x)=(-1)^{\lfloor n\rfloor } U (n,x)$.} we have:
\begin{equation}
\label{DDEUT}
\begin{array}{l}
y^{\prime}_{n}(x) =\Frac{x}{2}y_n (x)+ y_{n-1}(x),\\
y^{\prime}_{n-1}(x)=-\Frac{x}{2}y_{n-1}(x)+(n-1/2)y_{n}(x) .
\end{array}
\end{equation}
where $n$ will be real and positive.
For this system
\begin{equation}
\eta_n (x)=-\Frac{x}{2 \sqrt{n-1/2}},\,\bar{\eta}_n (x)=\eta_{n+1}(x),\lambda_n^{\pm}(x)=\Frac{-2}{x\mp\sqrt{4n-2+x^2}}
\end{equation}

From \cite[12.9.1]{Olv:2010:NIST} we have $h_n(+\infty)=0^{-}$ and $h_n'(+\infty)=0^+$ and
because $\lambda_n^{-}(+\infty)=0^+$ then theorem \ref{casogeneral} holds, as well as
theorems \ref{1stP} and \ref{PK21}. Therefore

\begin{theorem} 
\label{uaxuno}
For $n>1/2$ and $x\ge 0$ the following holds
\begin{equation}
\label{primeupos}
\Frac{2}{x+\sqrt{4n+2+x^2}}<\Frac{U(n,x)}{U (n-1,x)}<\Frac{2}{x+\sqrt{4n-2+x^2}}
\end{equation}

The lower bound also holds if $n\in (-1/2,1/2)$ and it turns to an equality if $n=-1/2$.
\end{theorem}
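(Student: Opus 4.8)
The plan is to read off the coefficients of the system (\ref{DDEUT}) and reduce the statement to the general results established above. First I would set $a_n(x)=x/2$, $b_n(x)=-x/2$, $d_n(x)=1$, $e_n(x)=n-1/2$, so that $d_n(x)e_n(x)=n-1/2>0$ precisely for $n>1/2$; substituting into (\ref{rootricn}) reproduces $\eta_n(x)=-x/(2\sqrt{n-1/2})$ and $\lambda_n^{\pm}(x)=-2/(x\mp\sqrt{4n-2+x^2})$. Since the defining ODE has $A_n(x)=-(x^2/4+n)$ with $B_n=B_{n-1}=0$ and $A_{n-1}(x)-A_n(x)=1>0$, Theorem \ref{monolam} applies and shows that $\lambda_n^{\pm}$ are monotonic with $d\lambda_n^{\pm}/dx$ of the sign of $A_{n-1}-A_n$; in particular $\lambda_n^{-}$ is increasing on $(0,+\infty)$, as one also checks directly from the explicit formula. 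Throughout I use that $U(n,x)>0$ for $x\ge 0$, so $h_n(x)=-U(n,x)/U(n-1,x)<0$, $|h_n(x)|=U(n,x)/U(n-1,x)$, and $s=\mbox{sign}(h_n)=-$; the relevant characteristic root is therefore $\lambda_n^{-}$.

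For the upper bound I would check the boundary hypotheses of Theorem \ref{casogeneral} (equivalently Theorem \ref{1stP}) at $c=b^-=+\infty$. The asymptotics in \cite[12.9.1]{Olv:2010:NIST} give $h_n(x)\to 0$ through negative values, so $h_n'(x)>0$ for large $x$, while $\lambda_n^{-}(x)\to 0$ from below with $\lambda_n^{-\prime}>0$. Hence $h_n(b^-)<0$, $\lambda_n^{-}$ is monotonic, and $h_n'(b^-)\lambda_n^{-\prime}(b^-)>0$, so Theorem \ref{casogeneral} yields $(|h_n|-|\lambda_n^{-}|)\lambda_n^{-\prime}<0$ on $(0,+\infty)$. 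Because $\lambda_n^{-\prime}>0$ this is $|h_n|<|\lambda_n^{-}|=2/(x+\sqrt{4n-2+x^2})$, i.e. the upper bound in (\ref{primeupos}); the endpoint $x=0$ follows by continuity.

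For the lower bound with $n>1/2$ I would apply Theorem \ref{PK21} with $s=-$. The hypothesis $s\bar\eta_n>0$ holds since $\bar\eta_n=\eta_{n+1}=-x/(2\sqrt{n+1/2})<0$ for $x>0$. Here $D_n=\sqrt{d_n/d_{n+1}}=1$ and $\bar\eta_n=\eta_{n+1}$, so the middle term in the denominator of (\ref{secobp}) collapses to $2D_n\bar\eta_n-\eta_{n+1}=\eta_{n+1}$, and with $E_nR_n=1/\sqrt{n+1/2}$ the expression simplifies to $S_n^{-+}=2/(x+\sqrt{4n+2+x^2})$; since $\lambda_n^{-\prime}>0$, Theorem \ref{PK21} then gives $|h_n|>S_n^{-+}$, the lower bound. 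The sign bookkeeping through (\ref{secobp}) together with this algebraic collapse is the step most prone to slips, but it is routine.

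Finally, to extend the lower bound to $n\in(-1/2,1/2)$ — where the system at level $n$ no longer satisfies $d_ne_n>0$ and the Riccati analysis at level $n$ is unavailable — I would argue directly through the recurrence (\ref{remi}), which reads $h_n=(-x+(n+1/2)h_{n+1})^{-1}$. The upper bound just established, applied one level up, needs only $e_{n+1}=n+1/2>0$, i.e. $n>-1/2$, and gives $|h_{n+1}|<2/(x+Q)$ with $Q=\sqrt{4n+2+x^2}$. Since $h_{n+1}<0$, the identity $x+(2n+1)/(x+Q)=(x+Q)/2$ (immediate from $Q^2=x^2+4n+2$) converts this into $|h_n|>2/(x+Q)$ for all $n>-1/2$. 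At $n=-1/2$ one has $e_{n+1}=0$, so the recurrence degenerates to $h_n=-1/x$ exactly, while $Q=x$ makes the bound $2/(x+x)=1/x$; this gives the claimed equality. The genuine (as opposed to computational) difficulties here are pinning down the one-sided boundary behavior at $+\infty$ that licenses Theorem \ref{casogeneral}, and covering the range $n<1/2$ through the recurrence rather than the Riccati equation.
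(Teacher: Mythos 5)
Your proposal is correct and is essentially the paper's own proof: the upper bound comes from Theorem \ref{casogeneral}/\ref{1stP} applied with the boundary data at $+\infty$ (signs of $h_n$, $h_n'$ from DLMF 12.9.1), the lower bound for $n>1/2$ from Theorem \ref{PK21} (which, as you note, collapses algebraically to one pass of the paper's iteration (\ref{ultimaiter})), and the range $n\in(-1/2,1/2)$ together with the equality at $n=-1/2$ from the recurrence, which only requires the upper bound one level up. One tiny repair: invoking continuity at $x=0$ only yields a non-strict inequality there; instead apply the theorems on an interval containing $0$ (legitimate, since the coefficients are continuous everywhere and $h_n$ keeps a fixed sign on all of $\mathbb{R}$ for $n>1/2$), which is also how the paper's subsequent remark extends the bounds to all real $x$.
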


The lower bound is obtained from the upper bound and the application
of the three-term recurrence relation: if $B_m(n,x)$ is a positive upper (lower) bound for $U(n,x)/U(n-1,x)$, $x>0$,
then
\begin{equation}
\label{ultimaiter}
B_{m+1}(n,x)=1/(x+(n+1/2) B_m(n+1,x))
\end{equation}
is a lower (upper) bound for the same ratio.
The process can be continued as $m\rightarrow +\infty$
and the sequence 
is convergent (because $U(n,x)$ is minimal).

Now, consider $y_n(x)=U(n,-x)$, which is also solution of (\ref{DDEUT}). 
Using the values of $U(n,0)$ and $U'(n,0)$ 
\cite[12.2.6-7]{Olv:2010:NIST} it is easy to prove that
$h_n (0^+)>0$, $h_n^{\prime}(0^+)>0$, $n>1/2$, $x\ge 0$ and then 
$h_n^{\prime}(0^+)d\lambda^{+}_n (0^+)/dx>0$ and 
Theorem \ref{casopos} holds. The corresponding Perron-Kreuser bounds (theorems 
\ref{1stP} and \ref{PK22}), give:

\begin{theorem} 
\label{cuaxsos}
For $n>3/2$ and $x\ge 0$ the following holds
\begin{equation}
\label{primeuneg}
\Frac{x+\sqrt{4n-6+x^2}}{2n-1}<\Frac{U(n,-x)}{U (n-1,-x)}<\Frac{x+\sqrt{4n-2+x^2}}{2n-1}
\end{equation}
The upper bound is also valid if $n\in (1/2,3/2)$. 

\end{theorem}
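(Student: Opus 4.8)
The plan is to run exactly the argument used for Theorem~\ref{uaxuno}, but anchored at the left endpoint $x=0$ instead of at $+\infty$, and using the dominant rather than the minimal branch of the recurrence. First I would record that $y_n(x)=U(n,-x)$ solves the system (\ref{DDEUT}): the defining equation $y''-(x^2/4+n)y=0$ is invariant under $x\mapsto -x$, and applying the chain rule to the two derivative relations for $U$ in \cite{Olv:2010:NIST} shows that the reflected functions satisfy (\ref{DDEUT}) verbatim (the sign flips introduced by $x\mapsto -x$ cancel, so no phase factor is needed). Hence $h_n(x)=U(n,-x)/U(n-1,-x)$ satisfies the Riccati equation (\ref{Riccaini}) with $a_n=x/2$, $b_n=-x/2$, $d_n=1$, $e_n=n-1/2$, and the characteristic roots and $\eta_n$ are those displayed before the theorem.

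The core of the proof is to check the hypotheses of Theorem~\ref{casopos} at $x=0^+$. Evaluating the system at $x=0$ gives $y_n'(0)=y_{n-1}(0)$ and $y_{n-1}'(0)=(n-1/2)y_n(0)$, so that $h_n(0)=U(n,0)/U(n-1,0)>0$ and, from the Riccati equation, $h_n'(0)=1-(n-1/2)h_n(0)^2$. Positivity of $h_n(0)$ is immediate since $U(n,0)>0$ for $n>1/2$. For $h_n'(0)>0$ I would insert the closed forms of $U(n,0)$ and $U'(n,0)$ from \cite{Olv:2010:NIST}, which collapse $h_n(0)$ to $2^{-1/2}\Gamma(n/2+1/4)/\Gamma(n/2+3/4)$; since $\lambda_n^+(0)=1/\sqrt{n-1/2}$, the inequality $h_n'(0)>0$ is equivalent to $h_n(0)<\lambda_n^+(0)$, i.e. to the gamma-quotient bound $\Gamma(n/2+1/4)/\Gamma(n/2+3/4)<\sqrt{2/(n-1/2)}$.

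I expect this gamma-ratio estimate to be the only genuine obstacle. It is not a consequence of the elementary Gautschi bound, which is too weak here, but it does follow from a sharper Kershaw-type inequality for $\Gamma(x+1)/\Gamma(x+1/2)$; with the shift $x=n/2-1/4$ this yields the stronger estimate $h_n(0)<1/\sqrt n<1/\sqrt{n-1/2}$ for all $n>1/2$, which is exactly what is needed. Once $h_n(0)>0$ and $h_n'(0)>0$ are secured, I would note that $\lambda_n^+$ is increasing: by Theorem~\ref{monolam}, $d\lambda_n^+/dx$ has the sign of $-\eta_n'(x)=1/(2\sqrt{n-1/2})>0$. Thus $h_n'(0^+)\lambda_n^{+\prime}(0^+)>0$, Theorem~\ref{casopos} applies, and feeding this into the first Perron–Kreuser bound (Theorem~\ref{1stP}) with $s=+$ gives $h_n(x)<F_n^+(x)=(x+\sqrt{4n-2+x^2})/(2n-1)$. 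This upper bound is valid wherever $d_ne_n>0$ and the initial conditions hold, namely for all $n>1/2$, which accounts for its persistence on $(1/2,3/2)$.

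For the lower bound I would apply the second Perron–Kreuser bound for dominant solutions (Theorem~\ref{PK22}): here $\bar\eta_{n-1}=\eta_n<0$ for $x>0$, so $s\bar\eta_{n-1}<0$ and $U(n,-x)$ is indeed the dominant branch. Substituting $d_n\equiv1$, $e_n=n-1/2$, $R_n=1/\sqrt{n-1/2}$, $E_{n-1}=\sqrt{(n-3/2)/(n-1/2)}$ and $\eta_{n-1}=-x/(2\sqrt{n-3/2})$ into (\ref{secobp2}) collapses $S_n^{+-}$ to $(x+\sqrt{4n-6+x^2})/(2n-1)$, giving $h_n>S_n^{+-}$. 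Because this step invokes Theorem~\ref{1stP} after the shift $n\mapsto n-1$, it requires $n-1>1/2$, i.e. $n>3/2$ (equivalently $n-3/2>0$, so that the radical is real); this is precisely why the lower bound is stated only for $n>3/2$ while the upper bound survives on $(1/2,3/2)$.
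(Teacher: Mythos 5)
Your proposal is correct and follows essentially the same route as the paper's (very terse) proof: verify $h_n(0^+)>0$ and $h_n'(0^+)>0$ from the values of $U(n,0)$, invoke Theorem \ref{casopos} (equivalently Theorem \ref{1stP}) for the upper bound valid for all $n>1/2$, and Theorem \ref{PK22} with the shift $n\to n-1$ for the lower bound, which forces $n>3/2$. One small correction: the gamma-quotient estimate you need, $\Gamma(n/2+3/4)/\Gamma(n/2+1/4)>\sqrt{n/2-1/4}$, i.e. $h_n(0)<1/\sqrt{n-1/2}=\lambda_n^+(0)$, is \emph{exactly} the elementary Gautschi inequality $\Gamma(x+1)/\Gamma(x+1/2)>\sqrt{x}$ at $x=n/2-1/4$, so Gautschi is not ``too weak'' --- your Kershaw-type refinement is valid but unnecessary.
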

The upper bound in (\ref{primeupos}) has the same expression as (\ref{primeuneg}) but with $x$ in 
replaced by $-x$. Therefore:
\begin{remark}
Theorems \ref{uaxuno} and \ref{cuaxsos} hold for all real $x$, but for $x<0$ the lower bound of Theorem \ref{uaxuno}
only holds for all $x<0$ if $n>1/2$. The lower bounds are sharper when $x>0$.
\end{remark}

The following Tur\'an-type inequalities follows from Theorems \ref{uaxuno} and \ref{cuaxsos}

\begin{theorem}
Let $F(x)=U(n,x)^2/(U(n-1,x) U (n+1,x))$. 

The following holds for all real $x$:
\begin{equation}
\sqrt{\Frac{n-3/2}{n+1/2}}<\Frac{n-1/2}{n+1/2}F(x)<1<F(x)<\sqrt{\Frac{n+3/2}{n-1/2}}
\end{equation}
The first inequality holds for $n>3/2$ and the rest for $n>1/2$. For $x<0$ the third inequality
 also holds if $n\in (-1/2,1/2)$.
\end{theorem}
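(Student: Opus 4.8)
The plan is to express everything through the single ratio $r_k(x)=U(k,x)/U(k-1,x)$. Since
\begin{equation}
F(x)=\frac{U(n,x)^2}{U(n-1,x)U(n+1,x)}=\frac{U(n,x)/U(n-1,x)}{U(n+1,x)/U(n,x)}=\frac{r_n(x)}{r_{n+1}(x)},
\end{equation}
the problem falls exactly into the scheme of Section \ref{turanin}: I would insert the two-sided bounds for $r_n$ and $r_{n+1}$ (the latter obtained from the former by the shift $n\to n+1$) and form the quotient bounds $U_n(x)=(\mbox{upper bound of }r_n)/(\mbox{lower bound of }r_{n+1})$ and $L_n(x)=(\mbox{lower bound of }r_n)/(\mbox{upper bound of }r_{n+1})$, so that $L_n(x)<F(x)<U_n(x)$; the four constants in the statement are then the extrema of $L_n$ and $U_n$. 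Because the sharp ratio bounds differ on the two half-lines, I would split the variable range and use Theorem \ref{uaxuno} on $x\ge 0$ and Theorem \ref{cuaxsos} (which bounds precisely $U(n,-x)/U(n-1,-x)$) on $x\le 0$; throughout, $r_n,r_{n+1}>0$ since $x^2/4+n>0$ keeps the governing ODE nonoscillatory, so the functions have constant sign.

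First I would harvest the two inequalities that arise from exact cancellations. On $x\ge 0$ the lower bound of $r_n$ in (\ref{primeupos}) and the upper bound of $r_{n+1}$ are both equal to $2/(x+\sqrt{4n+2+x^2})$, whence $r_n>r_{n+1}$ and $L_n(x)\equiv 1$; this gives $1<F(x)$, the third inequality, and the extension of the lower bound in Theorem \ref{uaxuno} down to $n\in(-1/2,1/2)$ is exactly what produces the stated extension of this inequality for $x<0$. Dually, on $x\le 0$ the upper bound of $r_n$ and the lower bound of $r_{n+1}$ supplied by Theorem \ref{cuaxsos} share the factor $-x+\sqrt{4n-2+x^2}$, so $U_n(x)$ collapses to the constant $(2n+1)/(2n-1)=(n+1/2)/(n-1/2)$, i.e.\ $\frac{n-1/2}{n+1/2}F(x)<1$ (the second inequality); since $(n+1/2)/(n-1/2)>\sqrt{(n+3/2)/(n-1/2)}$ this also holds for $x\ge 0$, hence for all real $x$.

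The two square-root constants live in the nonconstant quotient bounds. On $x\ge 0$,
\begin{equation}
U_n(x)=\frac{x+\sqrt{4n+6+x^2}}{x+\sqrt{4n-2+x^2}},
\end{equation}
and the elementary identity $\frac{d}{dx}\log(x+\sqrt{c+x^2})=(c+x^2)^{-1/2}$ gives $(\log U_n)'=(4n+6+x^2)^{-1/2}-(4n-2+x^2)^{-1/2}<0$, so $U_n$ is strictly decreasing and its maximum on $[0,\infty)$ is $U_n(0)=\sqrt{(4n+6)/(4n-2)}=\sqrt{(n+3/2)/(n-1/2)}$, the fourth inequality. The companion identity $\frac{d}{dx}\log(-x+\sqrt{c+x^2})=-(c+x^2)^{-1/2}$ shows that the lower quotient bound assembled from Theorem \ref{cuaxsos},
\begin{equation}
L_n(x)=\frac{2n+1}{2n-1}\cdot\frac{-x+\sqrt{4n-6+x^2}}{-x+\sqrt{4n+2+x^2}},
\end{equation}
is monotone, so its minimum on $(-\infty,0]$ is $L_n(0)=\frac{n+1/2}{n-1/2}\sqrt{(n-3/2)/(n+1/2)}$, which rearranges to the first inequality; the requirement $4n-6\ge 0$ here forces $n>3/2$, matching the stated range, while on $x\ge 0$ the bound $F>1$ already dominates it. I expect the main difficulty to be organizational rather than analytic: one must attach the \emph{sharp} ratio bound to each half-line, verify that every quotient bound is well defined (positivity of the $r_k$ and of the radicands and denominators, guaranteed by $4n-2>0$), and track the parameter ranges inherited from the underlying ratios. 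In particular the sharp upper constant $\sqrt{(n+3/2)/(n-1/2)}$ is the value at $x=0$ of the globally decreasing $U_n$ and therefore governs the regime $x\ge 0$, whereas the constant $(n+1/2)/(n-1/2)$ is the one controlling $x\le 0$.
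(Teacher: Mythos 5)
Your decomposition $F=r_n/r_{n+1}$ and the insertion of the two-sided bounds of Theorems \ref{uaxuno} and \ref{cuaxsos} through the scheme of Section \ref{turanin} is precisely the paper's (implicit) proof, and your half-line computations are correct: the cancellation $2/(x+\sqrt{4n+2+x^2})$ giving $F>1$ on $x\ge 0$, the collapse of the upper quotient bound to $(2n+1)/(2n-1)$ on $x\le 0$, the log-derivative monotonicity of the quotient bounds with extrema at $x=0$, and the comparisons between the four constants. The genuine gap is the third inequality on the negative axis: the statement asserts $1<F(x)$ for all real $x$ when $n>1/2$, but your plan uses only Theorem \ref{cuaxsos} for $x\le 0$, and the best lower quotient bound available from it is your
\[
L_n(x)=\frac{2n+1}{2n-1}\cdot\frac{-x+\sqrt{4n-6+x^2}}{-x+\sqrt{4n+2+x^2}},
\]
whose value at $x=0$ is $\frac{n+1/2}{n-1/2}\sqrt{(n-3/2)/(n+1/2)}<1$; so near $x=0^-$ this tool cannot yield $F>1$. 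The missing ingredient is the Remark following Theorem \ref{cuaxsos}: for $n>1/2$ the \emph{lower} bound of Theorem \ref{uaxuno} remains valid for $x<0$, while its upper bound holds for all real $x$ (it coincides with the upper bound of Theorem \ref{cuaxsos} after rationalizing $-x+\sqrt{4n-2+x^2}$). With these, the same cancellation works on the whole real line and the gap closes.

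A second flaw is your attribution of the stated $x<0$, $n\in(-1/2,1/2)$ case to ``the extension of the lower bound in Theorem \ref{uaxuno}'': that extension is an $x\ge 0$ statement, and the paper's remark says explicitly that for $x<0$ this lower bound requires $n>1/2$; so your sentence proves $1<F(x)$ on $x\ge 0$ for $n\in(-1/2,1/2)$ and nothing at all for $x<0$. Finally, your (deliberate) bookkeeping assigns $\sqrt{(n+3/2)/(n-1/2)}$ only to the regime $x\ge 0$, so as written you never prove the fourth inequality for $x<0$ --- and indeed no proof can: from $U(a,-z)\sim\sqrt{2\pi}\,e^{z^2/4}z^{a-1/2}/\Gamma(a+1/2)$ as $z\to+\infty$ one gets
\[
\lim_{x\to-\infty}F(x)=\frac{\Gamma(n-1/2)\,\Gamma(n+3/2)}{\Gamma(n+1/2)^2}=\frac{n+1/2}{n-1/2}>\sqrt{\frac{n+3/2}{n-1/2}},
\]
so the theorem's ``for all real $x$'' is itself too strong for that inequality and your restriction to $x\ge 0$ is the defensible reading. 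In short: right strategy, correct calculus, but the third inequality for $x<0$ needs the Remark, and the two claims involving the ranges $n\in(-1/2,1/2)$ and ``all real $x$'' for the fourth inequality cannot be justified as you state them.
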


Finally, considering Theorem \ref{LGchain} and writing together the results for $U(n,x)$ and $U(n,-x)$ 
we have the next result.
\begin{theorem}
For all real $x$ and $n\ge 1/2$ the following holds:
\begin{equation}
\label{conbouU}
-\sqrt{x^2/4+n+1/2}<\Frac{U^{\prime} (n,x)}{U(n,x)}<-\sqrt{x^2 /4+n-1/2}
\end{equation}
The left inequality also holds for $n>-1/2$.
\end{theorem}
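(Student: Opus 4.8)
The plan is to obtain both inequalities by a direct application of Theorem~\ref{LGchain} to the system (\ref{DDEUT}), used twice: once for the solution $y_n(x)=e^{i\pi n}U(n,x)$, which gives the bounds for $x\ge 0$, and once for $y_n(x)=U(n,-x)$ (which, as noted above, also solves (\ref{DDEUT})), which after a reflection gives them for $x\le 0$. All the hypotheses that Theorem~\ref{LGchain} inherits from Theorem~\ref{1stP}, namely constancy of the sign of $h_n$, monotonicity of the relevant characteristic root, and the initial-slope condition, have already been checked in the derivations of Theorems~\ref{uaxuno} and \ref{cuaxsos}, so nothing new is needed on that front.

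The first step is to evaluate the central quantity in (\ref{LGD}) for this system. With $a_n(x)=x/2$, $b_n(x)=-x/2$, $d_n(x)=1$, $e_n(x)=n-1/2$ and $\eta_n(x)=-x/(2\sqrt{n-1/2})$ one has $(a_n+b_n)/2=0$, and a one-line simplification gives $\sqrt{d_n e_n}\,\sqrt{1+\eta_n^2}=\sqrt{x^2/4+n-1/2}$. Hence in both applications the common bound in (\ref{LGD}) is exactly $\sqrt{x^2/4+n-1/2}$; moreover by Theorem~\ref{monolam} (here $A_{n-1}(x)-A_n(x)=1>0$) both characteristic roots are increasing, so $d\lambda_n^{s}/dx>0$ in either case and the inequalities in (\ref{LGD}) keep their stated orientation.

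For $y_n(x)=e^{i\pi n}U(n,x)$ with $x\ge 0$ we have $h_n=-U(n,x)/U(n-1,x)<0$, so $s=-$, and since $y_n'/y_n=U'(n,x)/U(n,x)$ the right-hand inequality of (\ref{LGD}) reads $U'(n,x)/U(n,x)<-\sqrt{x^2/4+n-1/2}$, the desired upper bound. The left-hand inequality of (\ref{LGD}) controls $y_{n-1}'/y_{n-1}=U'(n-1,x)/U(n-1,x)$; applying it with the admissible shift $n\to n+1$ turns it into $U'(n,x)/U(n,x)>-\sqrt{x^2/4+n+1/2}$, the lower bound. This proves both inequalities for $x\ge 0$: the upper one requires $e_n=n-1/2>0$, whereas the shifted lower one only needs $e_{n+1}=n+1/2>0$, which is the origin of the extra range $n>-1/2$ quoted for the left inequality.

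Finally I would run the same argument for $y_n(x)=U(n,-x)$, valid for $x\ge 0$, where now $h_n=U(n,-x)/U(n-1,-x)>0$, so $s=+$. Here $y_n'/y_n=-U'(n,-x)/U(n,-x)$, so the two inequalities of (\ref{LGD}), together with the shift $n\to n+1$ for the lower one, become the same pair of bounds for $U'(n,t)/U(n,t)$ after the substitution $t=-x\le 0$; splicing this with the previous paragraph covers all real $x$. I expect no deep obstacle here, as the work is essentially bookkeeping: keeping the sign $s$ correct in each setup, so that after cancelling the factor $e^{i\pi n}$ in the first case or accounting for the sign in $\frac{d}{dx}U(n,-x)=-U'(n,-x)$ in the second the inequalities point the right way; verifying that $d\lambda_n^{s}/dx>0$ so that none is reversed; and tracking how the single shift $n\to n+1$ both produces the lower bound and relaxes its validity to $n>-1/2$.
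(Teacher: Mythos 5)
Your proposal is correct and follows exactly the paper's route: the paper obtains this theorem by applying Theorem~\ref{LGchain} to the system (\ref{DDEUT}) for $y_n(x)=e^{i\pi n}U(n,x)$ and for $y_n(x)=U(n,-x)$ and combining the two, with the lower bound arising from the shift $n\to n+1$ (which is also why it survives down to $n>-1/2$). Your sign bookkeeping ($s=-$ versus $s=+$), the evaluation $\sqrt{d_n e_n}\sqrt{1+\eta_n^2}=\sqrt{x^2/4+n-1/2}$, and the verification $d\lambda_n^{s}/dx>0$ all match the paper's (much terser) argument.
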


These type of bounds are useful for studying the attainable accuracy of methods
for computing the functions. In \cite{Gil:2006:CRP}, the following estimation for large
$x$ and/or $n$ was considered
for the condition number with respect to $x$:
\begin{equation}
C_x (U(a,x))=\left|x U^{\prime}(a,x)/U(a,x)\right|\sim x\sqrt{x^2/4+a},
\end{equation}
and similarly for $V(a,x)$. The bounds (\ref{conbouU}) prove that this a good estimation because it lies
between the upper and lower bounds.
From the previous discussion on the $V(a,x)$ function, one can prove that similar
bounds are valid for moderate $x$ ($x>1$ is enough); we consider later this function.

Integrating (\ref{conbouU}) we have
\begin{equation}
\begin{array}{l}
F_{n+1/2}(x)/F_{n+1/2}(y)<\Frac{U(n,y)}{U(n,x)}<F_{n-1/2}(x)/F_{n-1/2}(y),\\
F_{\alpha}(x)=\exp\left(\frac{x}{2}\sqrt{x^2/4+\alpha}\right)\left(x+2\sqrt{x^2/4+\alpha}\right)^{\alpha}
\end{array}
\end{equation}
and, in particular,
\begin{equation}
\label{bountot}
F_{n+1/2}(x)<\Frac{U(a,x)}{U(a,0)}<F_{n-1/2}(x)
\end{equation}
where
\begin{equation}
F_{\alpha}(x)=\exp\left(-\Frac{x}{2}\sqrt{\Frac{x^2}{4}+\alpha}\right)
\left(\Frac{x}{2\sqrt{\alpha}}+\sqrt{\Frac{x^2}{4\alpha}+1}\right)^{-\alpha}
\end{equation}

The bounds (\ref{bountot}) are useful for obtaining the range of parameters
for which function values are computable within the arithmetic capabilities of a
computer (overflow and underflow limits). These results confirms the estimations based on the Liouville-Green approximation
used in \cite{Gil:2006:ARP}.

\paragraph{$\circ$ Iterated coerror functions and Mill's ratio:}

In particular, considering Theorem \ref{uaxuno} and the relation of parabolic cylinder functions $U (n+1/2,x)$ with
the iterated coerror functions $i^n \mbox{erfc}(x)$ \cite[12.7.7]{Olv:2010:NIST}, $n\in {\mathbb N}$, the
following follows:
\begin{equation}
M_{n+1} (x)<\Frac{i^n \mbox{erfc}(x)}{i^{n-1} \mbox{erfc}(x)}<M_n (x),\,n=1,2,...\,;M_n(x)=(x+\sqrt{2n+x^2})^{-1} .
\end{equation}
These inequalities appear in \cite{Amos:1973:BIC}.

Theorem \ref{uaxuno} also gives bounds on Mill's ratio ($n=1/2$). From lower bound
in Theorem (\ref{uaxuno}) and the upper bound obtained by iterating with (\ref{ultimaiter}) we have

\begin{theorem}
Let $r(x)=e^{x^2/2}\int_{x}^{+\infty}e^{-t^2 /2}dt$, then
\begin{equation}
\label{boundis}
\Frac{2}{x+\sqrt{x^2+4}}< r(x) < \Frac{4}{3x+\sqrt{x^2+8}}
\end{equation}
\end{theorem}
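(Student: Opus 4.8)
The plan is to recognize Mill's ratio as the $n=1/2$ instance of the parabolic cylinder ratio already bounded in Theorem \ref{uaxuno}. First I would record the two closed-form evaluations $U(-1/2,x)=e^{-x^2/4}$ and $U(1/2,x)=\sqrt{\pi/2}\,e^{x^2/4}\mbox{erfc}(x/\sqrt{2})$, each of which is checked by verifying that the stated function solves $y''-(x^2/4+a)y=0$ with $a=\mp 1/2$, is recessive as $x\to+\infty$, and matches the normalization $U(a,0)$ from \cite[12.2.6]{Olv:2010:NIST}. The substitution $t=u\sqrt{2}$ turns $\int_x^{+\infty}e^{-t^2/2}dt$ into $\sqrt{\pi/2}\,\mbox{erfc}(x/\sqrt{2})$, so that
\begin{equation}
r(x)=e^{x^2/2}\int_x^{+\infty}e^{-t^2/2}dt=\sqrt{\pi/2}\,e^{x^2/2}\mbox{erfc}(x/\sqrt{2})=\Frac{U(1/2,x)}{U(-1/2,x)} .
\end{equation}
This identifies $r(x)$ with exactly the ratio bounded by Theorem \ref{uaxuno} at $n=1/2$.

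The lower bound is then immediate: setting $n=1/2$ in the lower bound of Theorem \ref{uaxuno} gives $\Frac{2}{x+\sqrt{4n+2+x^2}}=\Frac{2}{x+\sqrt{x^2+4}}$, which is precisely the left-hand side of (\ref{boundis}). Since that lower bound is asserted both for $n>1/2$ and for $n\in(-1/2,1/2)$, the value $n=1/2$ sits on the common boundary of the two ranges; I would justify its validity there either by continuity of $U(n,x)/U(n-1,x)$ and of the bound in $n$, or more cleanly by re-running the Riccati argument of Theorem \ref{casogeneral} directly at $n=1/2$, which delivers the strict inequality.

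For the upper bound I would apply a single step of the recurrence iteration (\ref{ultimaiter}). Starting from the lower bound of Theorem \ref{uaxuno} at $n=3/2$ (valid since $3/2>1/2$), namely the lower bound $\Frac{2}{x+\sqrt{x^2+8}}$ for $U(3/2,x)/U(1/2,x)$, the rule (\ref{ultimaiter}) with $n=1/2$ (so that $n+1/2=1$) converts this lower bound into an upper bound:
\begin{equation}
r(x)=\Frac{U(1/2,x)}{U(-1/2,x)}<\Frac{1}{\,x+\Frac{2}{x+\sqrt{x^2+8}}\,} .
\end{equation}
It then remains only to simplify the right-hand side. Writing $s=\sqrt{x^2+8}$ and using $x^2=s^2-8$, the fraction equals $\Frac{x+s}{s^2+xs-6}$; multiplying numerator and denominator by $s-x$ collapses the numerator to $s^2-x^2=8$ and the denominator to $2(3x+s)$, giving exactly $\Frac{4}{3x+\sqrt{x^2+8}}$, the right-hand side of (\ref{boundis}).

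The only genuinely delicate point is the endpoint case $n=1/2$ for the lower bound, since Theorem \ref{uaxuno} states the two halves of its range as $n>1/2$ and $n\in(-1/2,1/2)$ and leaves this boundary value implicit; everything else is a direct specialization of the earlier machinery together with the algebraic simplification of the once-iterated bound. The identification of $r(x)$ with the parabolic cylinder ratio is routine once the two explicit evaluations of $U(\pm1/2,x)$ are in hand, and the rationalization step that recovers $\Frac{4}{3x+\sqrt{x^2+8}}$ is the only nontrivial computation.
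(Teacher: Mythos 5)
Your proposal is correct and follows exactly the paper's route: the lower bound is Theorem \ref{uaxuno} specialized to $n=1/2$, and the upper bound comes from one application of the iteration (\ref{ultimaiter}) to the $n=3/2$ lower bound, followed by the rationalization that yields $4/(3x+\sqrt{x^2+8})$. You are in fact more careful than the paper on the endpoint $n=1/2$ (which the paper's statement of Theorem \ref{uaxuno} technically omits) and in making explicit the identification $r(x)=U(1/2,x)/U(-1/2,x)$, both of which the paper leaves implicit.
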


The lower bound was obtained in \cite{Birn:1942:IMR} and the upper bound in  \cite{Sam:1953:SIM}.
In our case, these results follow from a more general result. See also \cite{Bar:2008:MRM} for 
an alternative proof.

Further iterations (see (\ref{ultimaiter})) give additional sharper bounds: 
\begin{theorem}
\begin{equation}
R_{2k+1}<r(x)
<R_{2k}(x)
\end{equation}
\begin{equation}
R_n(x)=\Frac{1}{x+}\Frac{1}{x+}\Frac{2}{x+}
\ldots\Frac{n}{T_{n}(x)},\,T_n=(x+\sqrt{4n +x^2})/2
\end{equation}
\end{theorem}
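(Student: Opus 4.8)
The plan is to exhibit each $R_n(x)$ as the value obtained from the exact continued fraction for $r(x)$ by replacing its $n$-th tail with an explicit bound, and then to fix the direction of the resulting inequality by a parity count. Writing $\rho_k(x)=U(k,x)/U(k-1,x)$, the three-term recurrence behind the iteration map (\ref{ultimaiter}) gives the exact identity $\rho_k=\bigl(x+(k+1/2)\rho_{k+1}\bigr)^{-1}$, and since Mill's ratio is the case $k=1/2$ we have $r(x)=\rho_{1/2}(x)$. Unfolding this identity $n$ times yields $r=\Phi_n\bigl(\rho_{(2n+1)/2}\bigr)$, where $\Phi_n=\phi_1\circ\phi_2\circ\cdots\circ\phi_n$ and $\phi_j(t)=1/(x+jt)$. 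First I would check, by expanding the composition from the inside out, that substituting $t=1/T_n$ into $\Phi_n$ reproduces exactly the finite continued fraction $R_n(x)$ displayed in the statement; this is immediate once one notes that the innermost level $x+n\cdot(1/T_n)$ is literally the partial fraction $x+n/T_n$ appearing there, so that $R_n=\Phi_n(1/T_n)$.

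The second ingredient is a single inequality for the tail, furnished by Theorem \ref{uaxuno}. Applied at the half-integer index $k=(2n+1)/2$, for which $4k-2=4n$, its upper bound reads $\rho_{(2n+1)/2}<2/(x+\sqrt{4n+x^2})=1/T_n$, valid for every $n\ge 1$ and $x\ge 0$; the complementary lower bound at the same index is $\rho_{(2n+1)/2}>1/T_{n+1}$. It is worth recording here that $1/T_n$ is precisely the positive fixed point of $\phi_n$, since $T_n=(x+\sqrt{4n+x^2})/2$ satisfies $T_n^2-xT_n-n=0$, i.e. $x+n/T_n=T_n$; this is the first Perron-Kreuser bound of Theorem \ref{1stP} for the shifted ratio, and it explains both why the denominator $T_n$ appears in $R_n$ and why the bounds sharpen as $n$ grows.

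Finally I would close with the parity argument. Each $\phi_j$ is strictly decreasing and maps $(0,\infty)$ into itself, so the composition $\Phi_n$ is strictly decreasing when $n$ is odd and strictly increasing when $n$ is even. Since $r=\Phi_n(\rho_{(2n+1)/2})$ and $R_n=\Phi_n(1/T_n)$ with $\rho_{(2n+1)/2}<1/T_n$, monotonicity of $\Phi_n$ gives $r>R_n$ for odd $n$ and $r<R_n$ for even $n$; setting $n=2k+1$ and $n=2k$ yields $R_{2k+1}<r(x)<R_{2k}$, as claimed, and one checks that $n=1,2$ recover exactly the two bounds in (\ref{boundis}). The step I expect to be the main obstacle is the bookkeeping in the first paragraph together with the clean identification $\rho_{(2n+1)/2}<1/T_n$: one must be careful that the half-integer specialization of Theorem \ref{uaxuno} lands exactly on $1/T_n$ and that positivity is preserved throughout, so that the decreasing maps $\phi_j$ reverse inequalities in the intended way. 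I would end with a remark that, because $U$ is the minimal solution, the two-sided estimate $1/T_{n+1}<\rho_{(2n+1)/2}<1/T_n$ together with $1/T_n\to 0$ forces $R_{2k}-R_{2k+1}\to 0$, so the nested bounds in fact converge to $r(x)$.
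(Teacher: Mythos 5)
Your proof is correct and takes essentially the same route as the paper: you start from the first Perron--Kreuser upper bound of Theorem \ref{uaxuno} at the shifted half-integer index (which lands exactly on $1/T_n$) and iterate the exact recurrence map (\ref{ultimaiter}), with the parity of the number of decreasing maps $\phi_j$ producing the alternation between lower and upper bounds. The paper compresses this into the remark that further iterations of (\ref{ultimaiter}) give the bounds, each iteration reversing the inequality because $U(n,x)$ is the minimal solution; your composition-and-parity bookkeeping is that same argument written out explicitly.
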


where, as usual we denote $\frac{1}{a+}\frac{1}{b+}\ldots=1/(a+1/(b+\ldots))$

\paragraph{$\circ$ Hermite polynomials of imaginary variable}

A similar analysis to that for $U(n,-x)$ can
be carried for the PCF $V(n,x)$. Indeed, $y_n(x)=V(n,x)/\Gamma (n+1/2)$ is a solution of (\ref{DDEUT})
and 
$h_n(x)=y_n (x)/y_{n-1}(x)$ is such that $h_n(0^+)>0$. Two situations take place depending on
the values of $n$. First, if $n\in (2k-1,2k)$, $k\in{\mathbb N}$, then $h_n^{\prime}(0^+)>0$ and the upper bound of 
Theorem \ref{cuaxsos} holds in this case and for all $x>0$ while the lower bound will hold for $n\in (2k,2k+1)$.
Contrarily, if $n\in (2k,2k+1)$ then $h_n^{\prime}(0^+)<0$, while $h_n (+\infty)>0$, and the upper bound
only holds for large enough $x$; a similar situations occurs with the lower bound when $n\in (2k-1,2k)$.

We only consider the first case. Then, using the relation of $V(n+1/2,x)$, $n\in {\mathbb N }$, 
with Hermite polynomials \cite[12.7.3]{Olv:2010:NIST} we get:

\begin{theorem}
\begin{equation}
\Frac{V(n,x)}{V (n-1,x)}<\Frac{x+\sqrt{4n-2+x^2}}{2},\, x>0,\, n\in (2k-1,2k),\,k\in {\mathbb N}
\end{equation}
\begin{equation}
\Frac{x+\sqrt{4n-6+x^2}}{2}<\Frac{V(n,x)}{V (n-1,x)},\, x>0,\, n\in (2k,2k+1),\,k\in {\mathbb N}
\end{equation}
\begin{equation}
-i\Frac{H_{2k+1}(ix)}{H_{2k}(ix)}<x+\sqrt{4k+2+x^2},\, x>0,\,k =0,1,2\ldots
\end{equation}
\begin{equation}
i\Frac{H_{2k-1}(ix)}{H_{2k}(ix)}<(x+\sqrt{4k-2+x^2})^{-1},\, x>0,\,k\in{\mathbb N}
\end{equation}
\begin{equation}
\Frac{H_{2k}(ix)^2}{H_{2k-1}(ix)H_{2k+1}(ix)}>\sqrt{\Frac{k-1/2}{k+1/2}},\,k\in {\mathbb N}, x\in {\mathbb R}.
\end{equation}
\end{theorem}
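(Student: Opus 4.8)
The plan is to reduce everything to the half-integer specialization of the two $V$-ratio bounds, which are themselves the exact analogues for the system~(\ref{DDEUT}) of Theorem~\ref{cuaxsos} (proved there for $U(n,-x)$), now applied to $y_n=V(n,x)/\Gamma(n+1/2)$. First I would record, as the running text preceding the statement already does, that $h_n(0^+)>0$ and that $\mbox{sign}\,h_n'(0^+)$ alternates with the parity of the interval containing $n$. For $n\in(2k-1,2k)$ one has $h_n'(0^+)>0$, i.e. $0<h_n(0^+)<\lambda_n^+(0^+)$; since $\lambda_n^+(x)=(x+\sqrt{4n-2+x^2})/(2n-1)$ is increasing in $x$, Theorem~\ref{casopos} (equivalently the first Perron--Kreuser bound, Theorem~\ref{1stP}) gives $h_n(x)<\lambda_n^+(x)$ on $(0,\infty)$. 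Multiplying by $n-1/2$ and using $V(n,x)/V(n-1,x)=(n-1/2)h_n(x)$, which turns the $(2n-1)$ in the denominator into a $2$, yields the first displayed inequality. For the lower bound I would invoke the second Perron--Kreuser bound for dominant solutions, Theorem~\ref{PK22}; its hypotheses are those of Theorem~\ref{1stP} applied at the shifted index $n-1$, which forces $n-1\in(2k-1,2k)$, i.e. $n\in(2k,2k+1)$ --- exactly the stated range, and the sign condition $s\bar\eta_{n-1}<0$ holds since $\eta_n<0$ for $x>0$. A direct evaluation of $S_n^{s-}$ in~(\ref{secobp2}) for the coefficients of~(\ref{DDEUT}) gives $S_n^{+-}=(x+\sqrt{4n-6+x^2})/(2n-1)$, and the same normalization produces the second displayed inequality.

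Next I would pass to Hermite polynomials by specializing $n$ to half-integers and using the connection formula \cite[12.7.3]{Olv:2010:NIST}, which in the form I need reads $V(m+1/2,X)\propto e^{X^2/4}H_m(iX/\sqrt2)$ for $m=0,1,2,\dots$. Evaluating at the rescaled variable $X=\sqrt2\,x$ converts $H_m(iX/\sqrt2)$ into $H_m(ix)$ and, crucially, turns the square-root arguments $4n\mp2$ into $4k\mp2$ once the factor $\sqrt2$ is pulled out of the radical. Taking $n=2k+3/2\in(2k+1,2k+2)$ puts us in the upper-bound range and, after clearing the (purely imaginary) ratio of normalization constants, produces the third inequality $-iH_{2k+1}(ix)/H_{2k}(ix)<x+\sqrt{4k+2+x^2}$; taking $n=2k+1/2\in(2k,2k+1)$ puts us in the lower-bound range, and inverting the resulting bound on $V(2k+1/2,X)/V(2k-1/2,X)$ produces the fourth inequality. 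The powers of $i$ in the statement are dictated by the parity of the Hermite index, so that each ratio is real and positive for $x>0$, and the fact that the multiplicative constant comes out exactly $1$ is confirmed by the Perron--Kreuser sharpness: both sides behave like $2x$ as $x\to+\infty$.

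For the Tur\'an-type inequality I would set $A=-iH_{2k+1}(ix)/H_{2k}(ix)$ and $B=-iH_{2k}(ix)/H_{2k-1}(ix)$, both real and positive for real $x$, and use the algebraic identity $H_{2k}(ix)^2/(H_{2k-1}(ix)H_{2k+1}(ix))=B/A$. The third inequality gives $A<x+\sqrt{4k+2+x^2}$ and the fourth, rewritten, gives $B>x+\sqrt{4k-2+x^2}$, whence $B/A>(x+\sqrt{4k-2+x^2})/(x+\sqrt{4k+2+x^2})$ for $x>0$. The right-hand side is increasing in $x\ge0$, so it attains its minimum at $x=0$, where it equals $\sqrt{(4k-2)/(4k+2)}=\sqrt{(k-1/2)/(k+1/2)}$; this proves the bound for $x>0$, and since the Tur\'an quotient is even in $x$ (the numerator is even and the denominator is a product of two odd factors, hence even) it extends to all real $x$.

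The routine parts are the evaluation of $S_n^{s-}$ and the chain of normalization constants. The step I expect to be most delicate is the bookkeeping in the Hermite reduction --- simultaneously tracking the rescaling $X=\sqrt2\,x$, the parity-dependent powers of $i$, and the ratio of connection constants, so that the final inequalities emerge with the clean constants stated and no stray factors. Beyond this bookkeeping, the only genuinely analytic point is the elementary monotonicity of $(x+\sqrt{4k-2+x^2})/(x+\sqrt{4k+2+x^2})$ that pins the Tur\'an minimum at $x=0$.
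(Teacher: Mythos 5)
Your proposal is correct and takes essentially the same route as the paper: the paper likewise obtains the two $V$-ratio bounds by applying the analysis behind Theorem \ref{cuaxsos} (i.e., Theorems \ref{1stP} and \ref{PK22}) to $y_n(x)=V(n,x)/\Gamma(n+1/2)$ as a solution of (\ref{DDEUT}), with the parity-dependent sign of $h_n'(0^+)$ selecting the ranges $(2k-1,2k)$ and $(2k,2k+1)$, and then specializes to half-integer order via \cite[12.7.3]{Olv:2010:NIST} to obtain the Hermite inequalities, with the Tur\'an bound following the scheme of section \ref{turanin}. Your explicit evaluation of $S_n^{s-}$, the $\sqrt{2}$ bookkeeping (which indeed closes with constant exactly $1$), and the monotonicity argument pinning the Tur\'an minimum at $x=0$ are precisely the details the paper leaves unwritten.
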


Hermite polynomials
of imaginary argument were also considered in \cite{Ismail:2007:MPD}.
The well-known Tur\'an-type inequality for Hermite polynomials \cite{Sko:1954:OIT} 
$H_n(x)^2-H_{n-1}(x) H_{n+1}(x)>0$, $x\in {\mathbb R}$, does not hold on the imaginary axis, but a similar
property $H_{n}(ix)^2-\sqrt{(n-1)/(n+1)}H_{n-1}(ix)H_{n+1}(ix)>0$ holds true for all $x>0$ if $n$ is even.

\subsubsection{Oblate Legendre functions}
\label{OLFs}

These are Legendre functions of imaginary argument, which are functions appearing in the solution of
Dirichlet problems in oblate spheroidal coordinates \cite{Gil:1998:CEP}. Denoting
\begin{equation}
p_{n}(x)=e^{-i n\pi/2}P_{n}^{m}(ix)
\end{equation}
and using the differential relations 
\cite[14.10.4-5]{Olv:2010:NIST} we have
\begin{equation}
\begin{array}{l}
p_{n}^{\prime}(x)=\Frac{1}{1+x^2}\left\{n x p_{\nu} (x)+ (n+m) p_{\nu-1}(x)\right\}\\
p_{n-1}^{\prime}(x)=\Frac{1}{1+x^2}\left\{-n x p_{\nu -1} (x)+ (n-m) p_{\nu }(x)\right\}
\end{array}
\end{equation}
and $q_{n} (x)=Q_{n}^{m}(ix)$, $Q_n^m$ being the second kind Legendre function, satisfies the same system. We consider $n>m$ and $x>0$. 
This is again an example for which Theorem \ref{monolam}
holds. The roles played in this case by the functions $Q_n^m (ix)$ and $P_n^m(ix)$ are very similar 
to the roles of $U(n,x)$ and $V(n,x)$ in the previous section. We omit details and only summarize
the main results.

\begin{theorem} The following holds for $x>0$ and real $n>m>0$
\begin{equation}
0<i\Frac{Q_n^m (ix)}{Q_{n-1}^m (ix)}<\Frac{n+m}{n}\left[x+\sqrt{1+x^2-\Frac{m^2}{n^2}}\right]^{-1}<\sqrt{\Frac{n+m}{n-m}}
\end{equation}
\begin{equation}
i\Frac{Q_n^m (ix)}{Q_{n-1}^m (ix)}>\Frac{n+m}{nx+(n+1)\sqrt{1+x^2-\Frac{m^2}{(n+1)^2}}}
\end{equation}
\begin{equation}
1<\Frac{n+m+1}{n+m}\Frac{Q_n^m (ix)}{Q_{n-1}^m (ix) Q_{n+1}^m (ix)}<\sqrt{\Frac{(n+2)^2-m^2}{n^2-m^2}}
\end{equation}
\end{theorem}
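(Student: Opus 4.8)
\section*{Proof proposal}

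The plan is to mirror the treatment of $U(n,x)$ in Section~\ref{PCFs} essentially verbatim, since $Q_n^m(ix)$ plays here exactly the role of the minimal solution $U(n,x)$. First I would read off the coefficients of the system, $a_n=\Frac{nx}{1+x^2}$, $b_n=-\Frac{nx}{1+x^2}$, $d_n=\Frac{n+m}{1+x^2}$, $e_n=\Frac{n-m}{1+x^2}$, so that $d_ne_n=\Frac{n^2-m^2}{(1+x^2)^2}>0$ for $n>m$. A short computation then gives $\eta_n(x)=-\Frac{nx}{\sqrt{n^2-m^2}}$ and $R_n=\sqrt{\Frac{n+m}{n-m}}$. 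Since this is a degree recurrence for functions satisfying a second order ODE with $B_n=B_{n-1}$ and $A_n\ne A_{n-1}$, Theorem~\ref{monolam} applies and the characteristic roots $\lambda_n^{\pm}(x)$ are monotonic in $x>0$; here $\eta_n'(x)<0$, so $d\lambda_n^{\pm}/dx>0$, as asserted in the text.

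Next I would fix the phase, working with the normalized $q_n=e^{-in\pi/2}Q_n^m(ix)$ (the same phase that makes $p_n$ solve the real system), so that $h_n=q_n/q_{n-1}=-iQ_n^m(ix)/Q_{n-1}^m(ix)$ is real and solves the real Riccati equation (\ref{Riccaini}); the quantity in the statement is then $iQ_n^m(ix)/Q_{n-1}^m(ix)=-h_n=|h_n|$, i.e.\ $s=\mbox{sign}(h_n)=-$. The positivity $0<iQ_n^m/Q_{n-1}^m$ is Lemma~\ref{posiposi}. Using the large-$x$ behavior of $Q_n^m(ix)$ one checks, exactly as for $U(n,x)$, that $h_n(+\infty)=0^-$, $h_n'(+\infty)=0^+$ while $\lambda_n^{-}(+\infty)=0^+$, so the hypotheses of Theorems~\ref{casogeneral} and \ref{1stP} hold with $c=b^-=+\infty$. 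Theorem~\ref{1stP} then yields the upper bound $iQ_n^m/Q_{n-1}^m<F_n^{-}(x)=\Frac{n+m}{n}\left[x+\sqrt{1+x^2-\Frac{m^2}{n^2}}\right]^{-1}$, which is precisely (\ref{primobp}) after substituting $\eta_n,R_n$; the rightmost inequality $F_n^{-}(x)<\sqrt{\Frac{n+m}{n-m}}$ is just the value $F_n^{-}(0^+)$, since $F_n^{-}$ decreases in $x$. The lower bound comes from the second Perron--Kreuser bound for minimal solutions, Theorem~\ref{PK21}: here $Q_n^m(ix)$ is minimal, so $s\bar\eta_n>0$ follows from $\bar\eta_n=-\Frac{(2n+1)x}{2\sqrt{(n+m)(n+1-m)}}<0$ and $s=-$. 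Substituting $D_n=\sqrt{\Frac{n+m}{n+1+m}}$, $E_n=\sqrt{\Frac{n-m}{n+1-m}}$ and the above $R_n,\eta_n,\bar\eta_n$ into (\ref{secobp}), the radical $\sqrt{(n+1)^2-m^2}$ factors out and the expression collapses to $S_n^{-+}=\Frac{n+m}{nx+(n+1)\sqrt{1+x^2-m^2/(n+1)^2}}$, the second displayed inequality. This step is the most bookkeeping-heavy but entirely mechanical.

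Finally, for the Tur\'an-type inequality I would set $T_n=Q_n^m(ix)^2/(Q_{n-1}^m(ix)Q_{n+1}^m(ix))=g_n/g_{n+1}$ with $g_k=iQ_k^m/Q_{k-1}^m$, and combine the one-sided bounds $S_k^{-+}<g_k<F_k^{-}$ across consecutive indices as in Section~\ref{turanin}. For the lower estimate $T_n>S_n^{-+}/F_{n+1}^{-}$ the two factors share the radical $W=\sqrt{(n+1)^2(1+x^2)-m^2}$, giving $S_n^{-+}/F_{n+1}^{-}=\Frac{n+m}{n+m+1}\cdot\Frac{(n+1)x+W}{nx+W}\ge\Frac{n+m}{n+m+1}$, with infimum approached as $x\to0^+$; multiplying by $\Frac{n+m+1}{n+m}$ gives the left inequality. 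For the upper estimate $T_n<F_n^{-}/S_{n+1}^{-+}$ one finds $\Frac{n+m+1}{n+m}\cdot\Frac{F_n^{-}}{S_{n+1}^{-+}}=\Frac{(n+1)x+\sqrt{(n+2)^2(1+x^2)-m^2}}{nx+\sqrt{n^2(1+x^2)-m^2}}$, whose value at $x=0$ is $\sqrt{\Frac{(n+2)^2-m^2}{n^2-m^2}}$. The only genuinely non-routine step is confirming that this last quotient is maximized at $x=0^+$ (its limit as $x\to+\infty$ is the smaller quantity $\Frac{2n+3}{2n}$ for large $n$), which needs a monotonicity-in-$x$ argument; that, together with pinning down the $x\to+\infty$ asymptotics of $Q_n^m(ix)$ required to trigger Theorem~\ref{1stP}, is where I expect the main difficulty to lie.
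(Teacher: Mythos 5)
Your proposal is correct and takes essentially the same route as the paper, which in fact omits all details for this theorem and says only that $Q_n^m(ix)$ and $P_n^m(ix)$ play the roles of the minimal and dominant solutions $U(n,x)$ and $V(n,x)$ of section \ref{PCFs}; your identification of the coefficients, the computation of $\eta_n$, $\bar{\eta}_n$, $R_n$, $D_n$, $E_n$, and the instantiation of Theorems \ref{monolam}, \ref{1stP} and \ref{PK21} plus the Tur\'an combination of section \ref{turanin} are exactly that omitted argument, and your algebra checks out. The single step you flag as unproven (that the quotient $\bigl[(n+1)x+\sqrt{(n+2)^2(1+x^2)-m^2}\bigr]/\bigl[nx+\sqrt{n^2(1+x^2)-m^2}\bigr]$ attains its supremum as $x\to 0^+$) is precisely the $u_n=\max_x U_n(x)$ evaluation that the paper also leaves implicit, and it does hold, so nothing in your outline is at odds with the paper's proof.
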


\begin{theorem} The following holds for $x>0$ and integer $n,m$, $n>m$:
\begin{equation}
\label{primeP}
0<-i\Frac{P_n^m (ix)}{P_{n-1}^m (ix)}<\Frac{n}{n-m}\left[x+\sqrt{1+x^2-\Frac{m^2}{n^2}}\right],\, n-m \mbox{ odd}
\end{equation}
\begin{equation}
\label{secondP}
\Frac{1}{n-m}\left[nx+(n-1)\sqrt{1+x^2-\Frac{m^2}{(n-1)^2}}\right]<-i\Frac{P_n^m (ix)}{P_{n-1}^m (ix)},\, n-m \mbox{ even}
\end{equation}
\begin{equation}
\Frac{P_n^m (ix)^2}{P_{n-1}^m (ix) P_{n+1}^m (ix)}<1+\Frac{1}{n-m},\, n-m \mbox{ odd}
\end{equation}
\end{theorem}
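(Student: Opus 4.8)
The plan is to treat $P_n^m(ix)$ exactly as the paper treats the dominant parabolic cylinder function $V(n,x)$, reducing the three displayed inequalities to the Perron--Kreuser machinery of Theorems \ref{1stP} and \ref{PK22} together with the Tur\'an construction of Section \ref{turanin}. First I would read off the coefficients of the system for $p_n(x)=e^{-in\pi/2}P_n^m(ix)$, namely $a_n=nx/(1+x^2)$, $b_n=-nx/(1+x^2)$, $d_n=(n+m)/(1+x^2)$, $e_n=(n-m)/(1+x^2)$, so that $d_ne_n>0$ for $n>m>0$, $x>0$. A short computation gives $\eta_n(x)=-nx/\sqrt{n^2-m^2}$ and hence the characteristic roots $\lambda_n^{\pm}(x)=\frac{1}{n-m}\bigl(nx\pm\sqrt{n^2(1+x^2)-m^2}\bigr)$; in particular $\lambda_n^+(x)=\frac{n}{n-m}\bigl[x+\sqrt{1+x^2-m^2/n^2}\bigr]$ is exactly the upper bound in (\ref{primeP}), it is positive, and since $\eta_n'(x)=-n/\sqrt{n^2-m^2}<0$, Theorem \ref{monolam} (which the text notes applies here) gives $\lambda_n^{+\prime}>0$. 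The relevant ratio is $h_n(x)=p_n/p_{n-1}=-iP_n^m(ix)/P_{n-1}^m(ix)$, and because $\bar\eta_n<0$ for $x>0$ the dichotomy of Section \ref{DDEpar} is consistent with $P_n^m(ix)$ playing the dominant role of $V(n,x)$, with $s=\mbox{sign}(h_n)=+$.

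The parity split in the statement comes from the values at $x=0$. Using the parity relation $P_n^m(-t)=(-1)^{n-m}P_n^m(t)$ one has $P_n^m(0)=0$ precisely when $n-m$ is odd. For $n-m$ odd, then, $h_n(0)=0$ with $h_n'(0^+)=(P_n^m)'(0)/P_{n-1}^m(0)$, whose sign I would pin down from the explicit values and first derivatives at the origin \cite[14.5.1-2]{Olv:2010:NIST}; checking $h_n'(0^+)>0$ puts us in the situation of Theorem \ref{casopos} with $\lambda_n^{+\prime}>0$. Applying that theorem on $(\epsilon,\infty)$ and letting $\epsilon\to0^+$ (to accommodate the boundary zero $h_n(0)=0$) yields $0<h_n(x)<\lambda_n^+(x)$ on $(0,\infty)$, where positivity is Lemma \ref{posiposi}; this is (\ref{primeP}).

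For $n-m$ even the roles of numerator and denominator at $x=0$ swap: $P_{n-1}^m(0)=0$, so $h_n(0^+)=+\infty$ and the first Perron--Kreuser bound does not produce an upper bound directly. Instead I would invoke Theorem \ref{PK22}, the second bound for dominant solutions: since $s=+$ and $\bar\eta_{n-1}<0$ we have $s\bar\eta_{n-1}<0$, and the hypothesis of Theorem \ref{1stP} needed at index $n-1$ is exactly the upper bound $h_{n-1}<\lambda_{n-1}^+$ already proved in the odd case (note $(n-1)-m$ is odd). A routine simplification of (\ref{secobp2}) with the present $R_n,D_{n-1},E_{n-1},\eta_{n-1},\bar\eta_{n-1}$ then collapses $S_n^{+-}$ to $\frac{1}{n-m}\bigl[nx+(n-1)\sqrt{1+x^2-m^2/(n-1)^2}\bigr]$, giving the lower bound (\ref{secondP}).

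Finally the Tur\'an inequality is the quotient of the two previous bounds. Writing $W=\sqrt{n^2(1+x^2)-m^2}$, observe that $P_n^m(ix)^2/(P_{n-1}^mP_{n+1}^m)=h_n/h_{n+1}$; for $n-m$ odd I use the upper bound $h_n<\lambda_n^+=(nx+W)/(n-m)$ and, since $(n+1)-m$ is even, the lower bound $h_{n+1}>S_{n+1}^{+-}=\bigl((n+1)x+W\bigr)/(n+1-m)$. Their quotient is $\frac{(nx+W)(n+1-m)}{(n-m)((n+1)x+W)}$, and since $nx+W<(n+1)x+W$ for $x>0$ this is strictly less than $(n+1-m)/(n-m)=1+1/(n-m)$, which is the claim. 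The main obstacle is the boundary analysis at $x=0$: getting the sign of $h_n'(0^+)$ right for each parity (and justifying the $\epsilon\to0^+$ limit when $h_n(0)=0$) is what selects upper versus lower bounds and must be carried out carefully from the explicit Legendre values, whereas the algebraic reductions of $\lambda_n^+$ and $S_n^{+-}$ and the final Tur\'an quotient are mechanical.
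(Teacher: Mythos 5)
Your proposal is correct and takes essentially the same approach as the paper: the paper explicitly omits the details here, stating only that $P_n^m(ix)$ and $Q_n^m(ix)$ play the same roles as the dominant/minimal pair $V(n,x)$ and $U(n,x)$ in the parabolic cylinder section, and your argument is precisely the intended instantiation of that machinery (Theorem \ref{casopos}/\ref{1stP} with the parity analysis at $x=0$ for the odd case, Theorem \ref{PK22} via the recurrence at index $n-1$ for the even case, and the quotient of the two bounds for the Tur\'an inequality). I checked the key algebra --- the coefficients, $\eta_n(x)=-nx/\sqrt{n^2-m^2}$, the collapse of $S_n^{+-}$ to $\frac{1}{n-m}\left[nx+(n-1)\sqrt{1+x^2-m^2/(n-1)^2}\right]$, and the final quotient estimate --- and it is all sound, including your careful $\epsilon\to 0^+$ handling of the boundary zero $h_n(0)=0$.
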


For $m=0$ we have Legendre polynomials. If $n$ is odd, we have 
$P_n (ix)^2<0$ and therefore 
$P_n(ix)^2-(1+1/n)P_{n-1}(ix)P_{n+1}(ix)>0$. It appears, as numerical experiments show, 
that in this case the same Tur\'an inequality that holds in the real interval $(-1,1)$ \cite{Sze:1948:ITC} also holds
in the imaginary axis if $n$ is odd: $P_n(ix)^2-P_{n-1}(ix)P_{n+1}(ix)>0$; the same
is not true if $m\neq 0$.

\subsubsection{Laguerre functions of negative argument}

Next we consider an example for which Theorem \ref{monolam} can not be applied but the analysis is 
possible because the characteristic roots are monotonic. 

Consider the Laguerre functions $y_{\nu ,\alpha}(x)=L_{\nu}^{\alpha}(-x)$, $x>0$.
Using well known recurrences
and differentiation formulas, we have
\begin{equation}
\label{ricaL}
\begin{array}{l}
y_{\nu+1 ,\alpha -1}^{\prime}(x)=y_{\nu,\alpha}(x)\\
x y_{\nu ,\alpha}^{\prime}(x)= -(\alpha +x) y_{\nu ,\alpha}(x)+ (\nu+1)y_{\nu +1,\alpha -1}
\end{array}
\end{equation} 
and 
\begin{equation}
\label{recuL}
(\nu +1) y_{\nu+1,\alpha-1}(x)=(\alpha+x)y_{\nu,\alpha }(x)+x y_{\nu -1,\alpha +1}
\end{equation}
Considering \cite[Theorem 2]{Seg:2009:NSS} it follows that $y_{\nu ,\alpha}$ is a dominant
solution of the recurrence (\ref{recuL}) in the direction of increasing $\nu$ (and decreasing $\alpha$).

With $h(x)=y_{\nu,\alpha}(x)/y_{\nu+1 ,\alpha -1}(x)$, the positive characteristic root $\lambda^+(x)$ of the associated Riccati equation 
turns out to be increasing if $\nu>-1$ and $\alpha>0$. On the other hand, it is easy to check that for these values
$h(0^{+})>0$ and $h^{\prime}(0^{+})>0$.
Theorem \ref{casopos} holds and $\lambda^+ (x)$ is a bound:

\begin{theorem}
\label{upLa}
For any $\alpha>0$, $\nu>-1$ and $x>0$ the following holds
\begin{equation}
0<\Frac{L_{\nu+1}^{\alpha-1}(-x)}{L_{\nu}^{\alpha}(-x)}<\Frac{\alpha +x +\sqrt{(\alpha+x)^2+4(\nu+1)x}}{2(\nu+1)}
\end{equation}
\end{theorem}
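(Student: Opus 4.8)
The plan is to specialize the Riccati machinery of Section~2 to the ratio that actually appears in the statement,
$$g(x)=\Frac{L_{\nu+1}^{\alpha-1}(-x)}{L_{\nu}^{\alpha}(-x)}=\Frac{y_{\nu+1,\alpha-1}(x)}{y_{\nu,\alpha}(x)},$$
the reciprocal of the ratio $h$ used in the surrounding discussion. Taking $y_{\nu+1,\alpha-1}$ as the ``upper'' function, the system (\ref{ricaL}) is of the form (\ref{DDESta}) with $d=1$ and $e=(\nu+1)/x$, so that $de=(\nu+1)/x>0$ for $\nu>-1$ and $x>0$ and we are in the case analysed in Section~2. The Riccati equation for $g$ is $g'=1+\Frac{\alpha+x}{x}g-\Frac{\nu+1}{x}g^{2}$, and solving the characteristic quadratic $(\nu+1)g^{2}-(\alpha+x)g-x=0$ gives as its positive root
$$\mu^{+}(x)=\Frac{\alpha+x+\sqrt{(\alpha+x)^{2}+4(\nu+1)x}}{2(\nu+1)},$$
which is exactly the asserted upper bound; the lower bound $0<g$ will come from Lemma~\ref{posiposi}.

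First I would verify that $\mu^{+}$ is monotonic, which here has to be done by hand: since $e/d=x/(\nu+1)$ is not constant in $x$, Theorem~\ref{monolam} does not apply. Differentiating,
$$\Frac{d\mu^{+}}{dx}=\Frac{1}{2(\nu+1)}\left[1+\Frac{(\alpha+x)+2(\nu+1)}{\sqrt{(\alpha+x)^{2}+4(\nu+1)x}}\right],$$
which is positive for $\alpha>0$, $\nu>-1$, $x>0$, so $\mu^{+}$ is increasing on $(0,\infty)$. For the behaviour at the left endpoint I would use the values at the origin, $L_{\nu}^{\alpha}(0)=\Gamma(\nu+\alpha+1)/(\Gamma(\nu+1)\Gamma(\alpha+1))$, which give $g(0^{+})=\alpha/(\nu+1)>0$.

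The one genuinely delicate point, and the step I expect to be the main obstacle, is that $g(0^{+})=\alpha/(\nu+1)=\mu^{+}(0^{+})$: the ratio and its bounding root coincide at $x=0$, which is moreover a singular point of the Riccati equation, the coefficients $(\alpha+x)/x$ and $(\nu+1)/x$ diverging there. One therefore cannot feed $a=0$ into Theorem~\ref{casopos} directly, because the implication ``$g'>0\Rightarrow g<\mu^{+}$'' that its proof draws from the Riccati breaks down at the singular endpoint. I would dispose of this with a one-term local expansion of $g$: matching orders in the Riccati (the leading $1/x$ term cancels precisely because $g(0^{+})=\alpha/(\nu+1)$) gives $g'(0^{+})=\Frac{\alpha+\nu+1}{(1+\alpha)(\nu+1)}$, while the monotonicity formula above evaluated at $x=0$ gives $\mu^{+\prime}(0^{+})=\Frac{\alpha+\nu+1}{\alpha(\nu+1)}$; hence $0<g'(0^{+})<\mu^{+\prime}(0^{+})$, so that for all sufficiently small $x>0$ one has $0<g(x)<\mu^{+}(x)$ together with $g'(x)>0$.

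Fixing such an interior point $x_{0}>0$, the hypotheses of Theorem~\ref{casopos} hold there with $s=+$ and $\mu^{+\prime}>0$, and those of Lemma~\ref{posiposi} hold as well. Running the qualitative argument on $(x_{0},\infty)$ then yields $g>0$ (Lemma~\ref{posiposi}) and $(g-\mu^{+})\mu^{+\prime}<0$, i.e. $g<\mu^{+}$ (Theorem~\ref{casopos}), throughout that interval; letting $x_{0}\to0^{+}$ extends both inequalities to all of $(0,\infty)$, which is the claim. Equivalently, one may apply Theorem~\ref{casopos} to $h=1/g$, whose positive characteristic root $\lambda^{+}=1/\mu^{+}$ is decreasing, obtain $h>\lambda^{+}$, and then invert; the endpoint degeneracy $h(0^{+})=\lambda^{+}(0^{+})$ is handled in the same way.
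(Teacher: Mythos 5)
Your proof is correct and follows essentially the paper's own route: the paper likewise obtains the bound by checking that the positive characteristic root of the associated Riccati equation is increasing for $\alpha>0$, $\nu>-1$, that the ratio and its derivative are positive at $x=0^{+}$, and then invoking Theorem~\ref{casopos} (with Lemma~\ref{posiposi} for positivity) on $(0,\infty)$; the paper's $h$ is your $1/g$, evidently a typo in its definition, since the stated sign conditions ($\lambda^{+}$ increasing, $h'(0^{+})>0$) are the ones your $g$ satisfies. Your additional treatment of the degenerate singular endpoint, where $g(0^{+})=\mu^{+}(0^{+})$ so that the step ``$h'(a^{+})>0\Rightarrow h(a^{+})<\lambda^{+}(a^{+})$'' in the proof of Theorem~\ref{casopos} does not apply verbatim, is a sound and worthwhile repair of a point the paper passes over in silence.
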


On the other hand, from the recurrence (\ref{recuL}) we have 
\begin{equation}
\Frac{L_{\nu}^{\alpha}(-x)}{L_{\nu +1}^{\alpha -1}(-x)}=\left(\Frac{\alpha+x}{\nu+1}+\Frac{x}{\nu +1}
\Frac{L_{\nu-1}^{\alpha +1}(-x)}{L_{\nu}^{\alpha }(-x)}\right)^{-1}
\end{equation}
and from this we obtain the second Perron-Kreuser bound:

\begin{theorem}
\label{loLa}
For any $\alpha>-1$, $\nu>0$ and $x>0$ the following holds
\begin{equation}
\Frac{L_{\nu+1}^{\alpha-1}(-x)}{L_{\nu}^{\alpha}(-x)}>\Frac{\alpha+x-1+\sqrt{(\alpha+x+1)^2+4\nu x}}{2(\nu +1)}
\end{equation}
\end{theorem}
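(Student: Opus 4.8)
The plan is to obtain the lower bound for $h=L_{\nu+1}^{\alpha-1}(-x)/L_{\nu}^{\alpha}(-x)$ directly from the three-term recurrence (\ref{recuL}), feeding into it the upper bound already established in Theorem \ref{upLa}, but at shifted indices. Rearranging (\ref{recuL}) exactly as the paper does just before the statement, one has
\begin{equation}
h=\Frac{\alpha+x}{\nu+1}+\Frac{x}{\nu+1}\,\Frac{1}{\tilde h},\qquad \tilde h=\Frac{L_{\nu}^{\alpha}(-x)}{L_{\nu-1}^{\alpha+1}(-x)}.
\end{equation}
Since $x>0$ and $\nu>-1$, the coefficient $x/(\nu+1)$ is positive, so $h$ is an affine strictly increasing function of $1/\tilde h$; as $\tilde h>0$ (by Theorem \ref{upLa} at the shifted indices), replacing $\tilde h$ by any upper bound decreases $1/\tilde h$ and hence produces a \emph{lower} bound for $h$. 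Thus the whole problem reduces to bounding $\tilde h$ from above.

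For that I would apply Theorem \ref{upLa} with the substitution $(\nu,\alpha)\to(\nu-1,\alpha+1)$, whose left-hand ratio is precisely $\tilde h=L_{\nu}^{\alpha}(-x)/L_{\nu-1}^{\alpha+1}(-x)$. Its hypotheses $\alpha+1>0$ and $\nu-1>-1$ become $\alpha>-1$ and $\nu>0$, which is exactly the parameter range claimed in Theorem \ref{loLa}; this is the reason the range here is more restrictive than in Theorem \ref{upLa}. The theorem then yields $\tilde h<\lambda$ with
\begin{equation}
\lambda=\Frac{\alpha+1+x+\sqrt{(\alpha+1+x)^2+4\nu x}}{2\nu}.
\end{equation}

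It remains to substitute $1/\tilde h>1/\lambda$ into the recurrence and simplify. Writing $\gamma=\alpha+1+x$ and $\Delta=\sqrt{\gamma^2+4\nu x}$, the key step is to rationalize
\begin{equation}
\Frac{x}{\nu+1}\,\Frac{1}{\lambda}=\Frac{x}{\nu+1}\,\Frac{2\nu}{\gamma+\Delta}=\Frac{x}{\nu+1}\,\Frac{2\nu(\Delta-\gamma)}{\Delta^2-\gamma^2}=\Frac{\Delta-\gamma}{2(\nu+1)},
\end{equation}
where the identity $\Delta^2-\gamma^2=4\nu x$ makes the factor $x$ cancel. Adding the first term $(\alpha+x)/(\nu+1)$ and using $2(\alpha+x)-\gamma=\alpha+x-1$ collapses the whole expression to $(\alpha+x-1+\Delta)/(2(\nu+1))$, which is exactly the asserted bound.

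There is no deep obstacle in this computation; the two points requiring care are the \emph{direction} of the inequality — a lower bound for $h$ comes from an \emph{upper} bound for $\tilde h$, owing to the reciprocal in the recurrence — and the bookkeeping of the index shift, which is what forces $\nu>0$ and $\alpha>-1$. I would also note that, because $L_{\nu}^{\alpha}(-x)$ is a dominant solution of (\ref{recuL}) in the direction of increasing $\nu$, only this single iteration is admissible: iterating further would propagate bounds in the dominant direction, so that (unlike the minimal-solution case of Theorem \ref{PK21}) the resulting sequence of bounds does not converge.
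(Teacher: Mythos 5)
Your proposal is correct and follows essentially the same route as the paper: the paper likewise inverts the recurrence (\ref{recuL}) to express $L_{\nu+1}^{\alpha-1}(-x)/L_{\nu}^{\alpha}(-x)$ in terms of $L_{\nu-1}^{\alpha+1}(-x)/L_{\nu}^{\alpha}(-x)$ and then substitutes the upper bound of Theorem \ref{upLa} at the shifted indices $(\nu-1,\alpha+1)$, which is exactly the ``second Perron-Kreuser bound'' mechanism for dominant solutions. Your rationalization showing the collapse to $(\alpha+x-1+\sqrt{(\alpha+x+1)^2+4\nu x})/(2(\nu+1))$, and your accounting for the parameter range $\alpha>-1$, $\nu>0$ via the index shift, correctly supply the details the paper leaves implicit.
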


And from these bounds we get the following Tur\'an-type inequalities: 

\begin{theorem}
For any $\nu\ge 0$ and $\alpha\ge 0$, $x>0$ the following holds:
\begin{equation}
\Frac{\nu}{\nu+1}\Frac{\alpha}{\alpha+1}
<\Frac{L_{\nu+1}^{\alpha-1}(-x)}{L_{\nu}^{\alpha}(-x)}\Frac{L_{\nu-1}^{\alpha+1}(-x)}{L_{\nu}^{\alpha}(-x)}<\Frac{\nu}{\nu+1}
\end{equation}
\end{theorem}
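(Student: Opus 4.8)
The plan is to reduce the two-ratio product to a one-variable problem by means of the three-term recurrence (\ref{recuL}), and then feed in the one-sided bounds of Theorems \ref{upLa} and \ref{loLa}. Write the Tur\'an quantity as
\[
P := \frac{L_{\nu+1}^{\alpha-1}(-x)}{L_{\nu}^{\alpha}(-x)}\,\frac{L_{\nu-1}^{\alpha+1}(-x)}{L_{\nu}^{\alpha}(-x)} = R\,S,
\]
with the two positive ratios $R = L_{\nu+1}^{\alpha-1}(-x)/L_{\nu}^{\alpha}(-x)$ and $S = L_{\nu-1}^{\alpha+1}(-x)/L_{\nu}^{\alpha}(-x)$. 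Dividing (\ref{recuL}) by $(\nu+1)L_{\nu}^{\alpha}(-x)$ gives the single relation $(\nu+1)R = \beta + xS$ with $\beta := \alpha+x$. Eliminating $S$ yields $P = [(\nu+1)R^2-\beta R]/x$, which is strictly increasing for $R>\beta/(2(\nu+1))$; eliminating $R$ yields $P = [xS^2+\beta S]/(\nu+1)$, strictly increasing for all $S>0$. This monotonicity is exactly what lets a one-sided bound on a single ratio be propagated to $P$.

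The main obstacle is that the obvious route is too weak. Inserting the first Perron--Kreuser bound $R<U$ of Theorem \ref{upLa} into $P = [(\nu+1)R^2-\beta R]/x$ only gives $P<1$, because $U$ is precisely the root of $(\nu+1)t^2-\beta t-x=0$, so $P(U)=1$; the sharp constant $\nu/(\nu+1)$ is strictly smaller. Hence I cannot simply multiply an upper bound for $R$ by an upper bound for $S$: the first bound does not separate $P$ from $1$, and in fact a direct product of the $U$-type bounds exceeds $\nu/(\nu+1)$ once $x>\alpha+1$. The resolution is asymmetric use of the \emph{second} (lower) Perron--Kreuser bound.

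For the lower bound I would use $P=[(\nu+1)R^2-\beta R]/x$ together with $R>L$ from Theorem \ref{loLa}, where $L$ solves $(\nu+1)^2L^2-(\nu+1)(\beta-1)L-(\beta+\nu x)=0$ and satisfies $L>\beta/(2(\nu+1))$. Substituting the quadratic relation into $P(L)$ reduces the target $P>\frac{\nu}{\nu+1}\frac{\alpha}{\alpha+1}$ to $(\nu+1)L\le \beta+\frac{\nu x}{\alpha+1}$, which on squaring collapses to the elementary $1\le \frac{\beta+1}{\alpha+1}+\frac{\nu x}{(\alpha+1)^2}$, valid for all $x>0$. For the upper bound I would instead use $P=[xS^2+\beta S]/(\nu+1)$ and bound $1/S = L_{\nu}^{\alpha}(-x)/L_{\nu-1}^{\alpha+1}(-x)$ from below by applying Theorem \ref{loLa} under the shift $(\nu,\alpha)\to(\nu-1,\alpha+1)$, giving $1/S>\bigl(\beta+\sqrt{(\beta+2)^2+4(\nu-1)x}\bigr)/(2\nu)$. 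Since $(\beta+2)^2+4(\nu-1)x\ge \beta^2+4\nu x$ (equivalent to $\alpha+1\ge0$), this forces $S<S^{\ast}$, the positive root of $xt^2+\beta t-\nu=0$, whence by monotonicity $P<P(S^{\ast})=\frac{\nu}{\nu+1}$.

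Two points will need care. First, the shift used for the upper bound requires $\nu-1>0$, so the argument as written covers $\nu>1$; the range $0\le \nu\le 1$ (and the endpoint $\nu=0$, where the lower constant vanishes and $P>0$ is automatic) should be recovered separately, either by the qualitative Riccati analysis of Section \ref{desiccation} applied directly or by a continuity argument. Second, I would check sharpness against the endpoints that the bounds reproduce exactly: as $x\to0^+$ one gets $P\to\frac{\nu\alpha}{(\nu+1)(\alpha+1)}$ from $L_{\nu}^{\alpha}(0)=\Gamma(\nu+\alpha+1)/(\Gamma(\nu+1)\Gamma(\alpha+1))$, and as $x\to+\infty$ one gets $P\to\frac{\nu}{\nu+1}$ from $L_{\nu}^{\alpha}(-x)\sim x^{\nu}/\Gamma(\nu+1)$, confirming that neither constant can be improved.
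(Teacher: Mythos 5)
Your overall strategy is sound and, as far as it goes, correct: the lower bound argument (monotonicity of $P(R)=[(\nu+1)R^2-\beta R]/x$ for $R>\beta/(2(\nu+1))$, the quadratic $(\nu+1)^2L^2-(\nu+1)(\beta-1)L-(\beta+\nu x)=0$ satisfied by the bound of Theorem \ref{loLa}, and the reduction to $1\le\frac{\beta+1}{\alpha+1}+\frac{\nu x}{(\alpha+1)^2}$) checks out for all $\nu>0$, $\alpha\ge0$, and the upper bound argument via $S<S^{\ast}$ (using the down-shifted Theorem \ref{loLa} together with $(\beta+2)^2+4(\nu-1)x\ge\beta^2+4\nu x$) checks out for $\nu>1$. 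The paper itself offers no more than ``from these bounds we get'' as justification, and its generic recipe in Section \ref{turanin} is to multiply one-sided bounds for the two ratios; your observation that this naive product cannot work for the upper constant --- Theorem \ref{upLa} alone only yields $P<1$, and the product of the two $U$-type bounds exceeds $\nu/(\nu+1)$ precisely when $x>\alpha+1$ --- is correct and exposes a real deficiency in the paper's own sketch. In that sense your asymmetric, recurrence-based elimination is not just a stylistic variant: it is the repair that makes the upper bound provable at all from Theorems \ref{upLa} and \ref{loLa}.

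The genuine gap is the one you flag but do not close: the upper inequality for $0\le\nu\le 1$, whereas the theorem asserts $\nu\ge 0$. Neither of your proposed repairs can work. A continuity argument is powerless here, since there is no family of proved cases limiting onto $\nu\in(0,1]$. And the Riccati analysis of Section \ref{desiccation} applied directly to the pair $(L_{\nu}^{\alpha},L_{\nu-1}^{\alpha+1})$ gives, via Theorem \ref{casopos}, an \emph{upper} bound for $1/S=L_{\nu}^{\alpha}(-x)/L_{\nu-1}^{\alpha+1}(-x)$ (its characteristic root, i.e.\ the shifted Theorem \ref{upLa}), while what you need --- and what the upper Tur\'an inequality is in fact equivalent to, by your own monotonicity computation --- is the \emph{lower} bound $1/S>(\beta+\sqrt{\beta^2+4\nu x})/(2\nu)$. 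Within the paper's machinery such lower bounds for a dominant solution come only from one further iteration of the recurrence (\ref{recuL}), which forces the shift $\nu\to\nu-1$ and hence $\nu>1$; so the restriction is structural, not an artifact of your write-up. A further small error: at $\nu=0$ you claim ``$P>0$ is automatic,'' but $L_{-1}^{\alpha+1}\equiv 0$, so $P=0$ and the strict lower inequality degenerates there. To be fair, both defects are inherited from the paper, whose stated range $\nu\ge0$ is not supported by the tools it invokes either; but as a proof of the theorem \emph{as stated}, your argument establishes the lower bound for $\nu>0$ and the upper bound only for $\nu>1$.
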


A second 
independent 
solution of (\ref{ricaL}) 
which is a minimal solution of (\ref{recuL}) as
$\nu\rightarrow +\infty$ 
follows from \cite[Theorem 2]{Seg:2009:NSS}. 
Bounds can be also obtained for this solution. We omit the details. 

Other bounds and inequalities can be obtained using other recursions or using relations between contiguous functions.
For example, 
using \cite[18.9.13]{Olv:2010:NIST}, we have:
\begin{equation}
\Frac{L_{\nu+1}^{\alpha -1}(x)}{L_{\nu}^{\alpha}(x)}=1+
\Frac{L_{\nu+1}^{\alpha}(x)}{L_{\nu}^{\alpha}(x)}
\end{equation} 
and upper and lower bounds for $L_{n}^{\alpha}(-x)/L_{n-1}^{\alpha} (-x)$ follow from the previous results. As a consequence of this
new bounds, one can prove the following
\begin{theorem}
\begin{equation}
\Frac{\nu}{\nu+1}<
\Frac{L_{\nu -1}^{\alpha}(-x)}{L_{\nu}^{\alpha}(-x)}\Frac{L_{\nu +1}^{\alpha}(-x)}{L_{\nu}^{\alpha}(-x)}
<\Frac{\nu}{\nu+1}\Frac{\nu+\alpha+1}{\nu+\alpha-1}
\end{equation}
where the first inequality holds for $\nu>0$, $\alpha>-1$ and the second for $\nu>0$, $\nu+\alpha>1$.
\end{theorem}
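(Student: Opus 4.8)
The plan is to reduce everything to bounds on the same-parameter ratio $\rho_n(x):=L_n^\alpha(-x)/L_{n-1}^\alpha(-x)$ and then sandwich the Tur\'an quotient, which is exactly
$$
\frac{L_{\nu-1}^\alpha(-x)}{L_\nu^\alpha(-x)}\,\frac{L_{\nu+1}^\alpha(-x)}{L_\nu^\alpha(-x)}=\frac{\rho_{\nu+1}(x)}{\rho_\nu(x)} .
$$
First I would use the contiguous relation $L_n^{\alpha-1}(y)=L_n^\alpha(y)-L_{n-1}^\alpha(y)$ at $y=-x$ to write $\rho_n(x)=1+L_n^{\alpha-1}(-x)/L_{n-1}^\alpha(-x)$, so that Theorems \ref{upLa} and \ref{loLa}, applied with the index shift $\nu\mapsto n-1$, turn at once into an upper bound $\overline\rho_n(x)$ and a lower bound $\underline\rho_n(x)$ for $\rho_n(x)$, each a constant plus a linear term and a single square root, all divided by $2n$.

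Since every ratio in sight is positive, no monotonicity is needed: I get $\underline\rho_{\nu+1}/\overline\rho_\nu<\rho_{\nu+1}/\rho_\nu<\overline\rho_{\nu+1}/\underline\rho_\nu$. For the lower inequality I would take $\underline\rho_{\nu+1}$ from Theorem \ref{loLa} with index $\nu$ and $\overline\rho_\nu$ from Theorem \ref{upLa} with index $\nu-1$; crucially both carry the \emph{same} radicand shift $4\nu x$, so after clearing denominators the claim $\underline\rho_{\nu+1}/\overline\rho_\nu>\nu/(\nu+1)$ collapses to the elementary inequality
$$
1+\sqrt{(\alpha+x+1)^2+4\nu x}>\sqrt{(\alpha+x)^2+4\nu x},
$$
which holds for every $x>0$ because the difference of the two roots has modulus strictly below $1$ once $4\nu x>0$. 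Sharpness of the constant $\nu/(\nu+1)$ then follows from $\rho_n(x)\sim x/n$ as $x\to+\infty$, which makes the quotient tend to $\nu/(\nu+1)$ from above.

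The upper inequality is where the real work lies. Now $\overline\rho_{\nu+1}$ (Theorem \ref{upLa}, index $\nu$) and $\underline\rho_\nu$ (Theorem \ref{loLa}, index $\nu-1$) carry \emph{different} radicand shifts, $4(\nu+1)x$ and $4(\nu-1)x$, so the target $\overline\rho_{\nu+1}/\underline\rho_\nu<\frac{\nu}{\nu+1}\frac{\nu+\alpha+1}{\nu+\alpha-1}$ reduces, after clearing denominators, to the genuine two-radical inequality
$$
(\nu+\alpha-1)\bigl[2(\nu+1)+\alpha+x+\sqrt{(\alpha+x)^2+4(\nu+1)x}\bigr]<(\nu+\alpha+1)\bigl[2\nu+\alpha+x-1+\sqrt{(\alpha+x+1)^2+4(\nu-1)x}\bigr];
$$
isolating and squaring the two radicals is the main obstacle, and the factor $\nu+\alpha-1$ in the denominator is exactly what both produces and requires the hypothesis $\nu+\alpha>1$. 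I expect this constant to be a clean but non-sharp majorant (already at $x=0$ the two sides leave a definite margin), so the inequality should carry enough slack to be pushed through by bounding each root separately. A final bookkeeping step is to reconcile parameter ranges: as stated, Theorems \ref{upLa} and \ref{loLa} carry the restrictions $\alpha>0$ and, after the shift $\nu\mapsto\nu-1$, $\nu>1$, so extending the two inequalities down to the advertised ranges $\alpha>-1$ and $0<\nu\le 1$ will need a limiting argument or mildly relaxed versions of those bounds. The three-term recurrence $(\nu+1)\rho_{\nu+1}=(2\nu+\alpha+1+x)-(\nu+\alpha)/\rho_\nu$ and the endpoint values $\rho_n(0)=(n+\alpha)/n$ are convenient cross-checks.
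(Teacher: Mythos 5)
Your skeleton is the paper's own route: the paper also converts Theorems \ref{upLa} and \ref{loLa}, via the contiguous relation \cite[18.9.13]{Olv:2010:NIST}, into upper and lower bounds for the same-$\alpha$ ratio $\rho_n(x)=L_n^\alpha(-x)/L_{n-1}^\alpha(-x)$ and then sandwiches the Tur\'an quotient $\rho_{\nu+1}/\rho_\nu$. Your treatment of the \emph{first} inequality is correct: both bounds carry the radicand shift $4\nu x$, and the reduction to $1+\sqrt{(\alpha+x+1)^2+4\nu x}>\sqrt{(\alpha+x)^2+4\nu x}$ is valid. But the \emph{second} inequality is not proved: you reduce it to the two-radical inequality and then only state that it ``should carry enough slack to be pushed through by bounding each root separately.'' That expectation is unsafe, because the slack is not uniform in the parameters. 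At $x=0$ the sandwich quotient equals $\frac{\nu(\nu+\alpha+1)}{(\nu+1)(\nu+\alpha)}$ while the claimed majorant is $\frac{\nu(\nu+\alpha+1)}{(\nu+1)(\nu+\alpha-1)}$, so their ratio is $1-\frac{1}{\nu+\alpha}$; more generally for large $\alpha$ the two sides agree up to $O(1/\alpha)$ (e.g.\ for $\nu=2$, $\alpha=100$, $x=1$ the quotient is $\approx 0.673$ against the bound $\approx 0.680$). Any ``bound each root separately'' estimate that loses an $O(1/\alpha)$ term will therefore fail; the radical inequality has to be verified honestly (isolate the radicals, square, and control signs), and until that is done the harder half of the theorem is open.

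The second genuine gap is the parameter ranges, which you flag but do not close, and your proposed fix cannot close them. As you apply them, Theorem \ref{upLa} requires $\alpha>0$ (this enters \emph{both} Tur\'an inequalities, since $\overline\rho_\nu$ is needed for the lower one and $\overline\rho_{\nu+1}$ for the upper one), and Theorem \ref{loLa} after the shift $\nu\mapsto\nu-1$ requires $\nu>1$ (needed for the upper one). The theorem claims $\alpha>-1$ for the first inequality and $\nu>0$, $\nu+\alpha>1$ for the second. A limiting argument in $\alpha$ can at best give non-strict inequalities at the boundary point $\alpha=0$; it says nothing about $\alpha\in(-1,0)$, nor about $\nu\in(0,1]$ with $\nu+\alpha>1$, which are open regions of parameters, not limits of the region where your sandwich is valid. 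To reach the stated ranges you must go back one level: either re-derive \ref{upLa}/\ref{loLa}-type bounds on the wider domain, or run the Riccati argument (Theorem \ref{casopos}, checking monotonicity of $\lambda^+$ and the initial conditions $h(0^+)>0$, $h'(0^+)>0$) directly on the difference-differential system satisfied by the same-$\alpha$ pair $\{L_\nu^\alpha(-x),L_{\nu-1}^\alpha(-x)\}$, where the relevant conditions hold on larger parameter sets. Your cross-checks ($\rho_n(0)=(n+\alpha)/n$, $\rho_n(x)\sim x/n$ as $x\to+\infty$) are consistent with the statement but do not substitute for either missing step.
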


For positive $x$, it is known that $L_{n-1}^{\alpha}(x)L_{n+1}^{\alpha}(x)/L_n^{\alpha}(x)^2<1$
\cite{Sko:1954:OIT}. For
negative argument we have an upper bound greater that $1$, which suggests that the Tur\'an-type inequality
for positive $x$
 does not hold for negative $x$, as numerical experiments show.

\subsection{Two examples with $d_n (x) e_n(x)<0$}
\label{monocondit}

The DDEs corresponding to a pair $\{ p_n (x),p_{n-1}(x)\}$ of 
classical orthogonal polynomials satisfy $d_n (x)e_n (x)<0$ in their interval
of orthogonality because this is a necessary condition for oscillation
\cite[Lemma 2.4]{Segura:2002:ZSF}. However,
for values of the variable for which the polynomials are free of zeros, one can expect that 
$\eta_n(x)^2>1$ 
and that the DDE becomes monotonic ($\eta_n(x)^2<1$  is also a necessary condition for oscillation \cite[Thm. 2.1]{Gil:2003:CZT}). 
This is the case of Laguerre and Hermite
polynomials for large enough $x>0$. We consider these two examples.

\subsubsection{Hermite polynomials}

Hermite polynomials satisfy
\begin{equation}
\begin{array}{l}
H_n^{\prime}(x)=2n H_{n-1}(x),\\
H_{n-1}^{\prime}(x)=2 xH_{n-1}-H_n(x)
\end{array}
\end{equation}
We have $\eta_n (x)=x/\sqrt{2n}$ and $\eta_n (x)>1$ if $x>\sqrt{2n}$ (monotonic case). The characteristic roots are both
of them positive
\begin{equation}
\lambda_n^{\pm}(x)=x\pm\sqrt{x^2-2n}.
\end{equation}

Defining $h_{n}(x)=H_{n}(x)/H_{n-1}(x)$
we have that $h_n (+\infty)=+\infty$ and $h_n^{\prime} (+\infty)>0$ because the coefficient of degree $n$ of $H_n(x)$ is positive. 
Then $h_n(x)>\lambda_n^{+}(x)$
for enough $x>0$ because $h_n'(x)>0$ only if $h_n(x)<\lambda_n^{-}(x)$ or $h_n(x)>\lambda_n^{+}(x)$, but 
$h_n(+\infty)>\lambda_n^{-}(+\infty)=0^+$. Then, we have that $h_n(x)>\lambda_n^+ (x)$ for large $x$. And
 because $\lambda_n^{+ \prime}(x)>0$ if $x>\sqrt{2n}$, 
then, necessarily:

\begin{equation}
\label{primebounH}
h_n(x)=\Frac{H_{n}(x)}{H_{n-1}(x)}>x+\sqrt{x^2-2n},x\ge \sqrt{2n} .
\end{equation}
We can iterate the recurrence relation. Contrary to the case $e_n (x) d_n (x)>0$, we will not obtain
sequences of lower and upper bounds, but only lower bounds. Writing
\begin{equation}
h_{n+1}(x)=2x-2n/h_n(x)
\end{equation}
and using (\ref{primebounH}) we get a lower bound for $h_{n+1}(x)$. We shift the parameter $n$ and get
\begin{equation}
\label{seconbounH}
h_n(x)>x+\sqrt{x^2-2(n-1)},x\ge\sqrt{2(n-1)} .
\end{equation}
\noindent
This improves Eq. (\ref{primebounH}) and enlarges the range of validity of the bound with respect to $x$, but
reduces the range of validity with respect to $n$ ($n\ge 2$).

The next iteration gives a bound for $n\ge 3$:
\begin{equation}
\begin{array}{ll}
h_n(x)>&F(n,x),x\ge\sqrt{2(n-2)}\\
& F(n,x)=(n-2)^{-1}[(n-3)x+(n-1)\sqrt{x^2-2(n-2)}]
\end{array}
\end{equation}

Taking into account the largest zero of $H_n(x)$ is larger than the largest zero of $H_{n-1}(x)$ the previous bounds
give bounds on the largest zero of $H_n(x)$. We see that the largest zero of $H_n(x)$ is smaller than $\sqrt{2(n-k)}$ if
$n>k$.

We consider just one more iteration and get
\begin{equation}
h_n(x)\ge 2x-2(n-1)/F(n-1,x)=G(n,x),\,x>\sqrt{2(n-3)}
\end{equation}
and if $G(n,\sqrt{2(n-3)})> 0$ then $G(n,x)> 0$ if $x>\sqrt{2(n-3)}$, and 
the largest zero will be smaller than $\sqrt{2(n-3)}$; this condition is
met if $n\ge 7$. 
A sharper bound has recently appeared in the literature \cite{Dim:2010:SBE} valid for all $n$. However,
the result is sharper than previous results, like for instance those in \cite{Area:2004:ZGH}, which is interesting given the
simplicity of the analysis. This reflects the fact that the bounds on function ratios 
(our main topic) are
sharp. 

\subsubsection{Laguerre polynomials}

We give some results for Laguerre polynomials omitting details. 
Defining $h_n^\alpha (x)=-L_{n}^{\alpha}(x)/L_{n-1}^{\alpha}(x)$, we have
$h_n^\alpha (+\infty)=+\infty$ and $h^{\alpha\prime}_n(+\infty)=+\infty$ and, proceeding similarly as before:
\begin{equation}
\begin{array}{ll}
2n h_n^\alpha (x)
>& x-(2n+\alpha)+\sqrt{(x-2n-\alpha)^2-4n(n+\alpha)},\\
& x\ge 2n+\alpha+2\sqrt{n(n+\alpha)}
\end{array}
\end{equation}
and after the first iteration of the recurrence we have:
\begin{equation}
\begin{array}{l}
2n h_n^{\alpha} (x)
> f(x),\,  x\ge 2n^{*}+\alpha+2\sqrt{n^*(n^*+\alpha)},\,n^*=n-1,\\
 f(x)=x-(2n +\alpha)+\sqrt{(x-2n^{*}-\alpha)^2-4n^{*}(n^{*}+\alpha)} .
\end{array}
\end{equation}

This proves that the largest zero of $L_{n}^{\alpha}(x)$ is smaller than $x^{*}=2n+\alpha-2+\sqrt{(n-1)(n-1+\alpha)}$,
provided that $f(x^*)>0$, which is true if $\alpha>(n-1)^{-1}-(n-1)$, $n\ge 2$; notice that values $\alpha< -1$ are
allowed for large enough $n$. The bound in \cite{Ism:1992:BEZ} is slightly sharper, and is improved in \cite{Dim:2010:SBE}. 

Further iterations are possible, but not so easy to analyze. The next iteration will give a bound
\begin{equation}
\label{otrobl}
2n h_n^\alpha (x)
> g(x),\,  x\ge 2(n-2)+\alpha+2\sqrt{(n-2)(n-2+\alpha)}=x^{*}
\end{equation}
$x^{*}$ is an upper bound for the largest zero provided that $g(x^*)>0$. This condition is met for a larger
$\alpha$ range as $n$ becomes larger. For $n\ge 10$, this holds for any $\alpha >-1$. The bound 
(\ref{otrobl}) is of more limited in terms of $n$
but numerical experiments show that it is sharper than the
bound in \cite{Dim:2010:SBE} for $\alpha \le 12$

We expect that lower bounds for the smallest zero can be also obtained with a similar analysis.

The main message, as before, is that the bounds on function ratios are sharp for large $x$ because they give the correct asymptotic
behavior as $x\rightarrow +‌\infty$, but also for moderate $x$ given the sharpness on the bounds
on the largest zero.



\section*{Acknowledgements}
This work was supported by  {\emph{Ministerio de Ciencia e Innovaci\'on}}, 
project MTM2009-11686.

\bibliographystyle{unsrt}
\bibliography{biblio}

\end{document}